\documentclass[12pt]{amsart}
\usepackage[pagewise]{lineno}
\usepackage[left=2.8cm,right=2.8cm,top=2.9cm,bottom=2.9cm]{geometry}

\usepackage{amsfonts}
\usepackage{color}
\usepackage{graphicx}
\usepackage[a4paper,bookmarksnumbered,colorlinks, linkcolor=blue, citecolor=red, pagebackref, bookmarks, breaklinks]{hyperref}

\newtheorem{thm}{Theorem}[section]
\newtheorem{cor}[thm]{Corollary}
\newtheorem{lema}[thm]{Lemma}
\newtheorem{prop}[thm]{Proposition}
\theoremstyle{definition}

\theoremstyle{remark}

\numberwithin{equation}{section}

\title[Eigenvalue homogenization for the $p-$Laplacian with $L^q$ weights]{Eigenvalue homogenization for the $p-$Laplacian with rapidly oscillating $L^q$ weights}

\author{Ariel Salort}
\address{Ariel Salort \newline Departamento de Matematicas y Ciencia de Datos, Universidad San Pablo-CEU, CEU Universities, Urbanizacion Montepr\'incipe, 28660 Boadilla del Monte, Madrid, Spain.}
\email{\tt ariel.salort@ceu.es}
\urladdr{https://sites.google.com/view/amsalort/}

\begin{document}

\subjclass[2010]{35B27, 42B20 ,35J92}

\keywords{Homogenization, Eigenvalues, oscillating integrals}

\begin{abstract}
In this article, we study the convergence rates of variational eigenvalues of the $p$-Laplacian with rapidly oscillating weights and potentials belonging in a suitable $L^q$ space. Our analysis covers both Dirichlet and Neumann boundary conditions, and we derive explicit estimates in terms of the eigenvalue index $k \in \mathbb{N}$ and the oscillation parameter $\varepsilon > 0$. These results are obtained through a detailed examination of certain oscillatory integrals and extend previously known results in the literature.
\end{abstract}

\maketitle

\section{Introduction}
The homogenization of elliptic operators with rapidly oscillating coefficients has been and continues to be a highly active research area, given its wide range of applications in fields such as composite materials, optimal design, and shape optimization. To gain an introduction and comprehensive understanding of this theory, we recommend referring to \cite{A,CZ,JKO,k1,MV,OSY,SV}. A precise comprehension of the behavior of eigenpairs in response to small perturbations is particularly essential for modeling periodic phenomena mathematically. For a detailed description of various models and their treatment, we recommend referring to \cite{A,OSY} and their respective references.

The purpose of this article is to conduct a comprehensive analysis of the behavior of eigenvalues in a nonlinear equation that incorporates rapidly oscillating potentials and weight functions with appropriate integrability.  Specifically, for each value of $\varepsilon > 0$, we consider the Dirichlet problem ($\mathcal{D}_\varepsilon$) given by:
\begin{align}\tag{$\mathcal{D}_\varepsilon$}   \label{eq.p.eps} 
\begin{cases}
-\Delta_p u_\varepsilon + V_\varepsilon |u|_\varepsilon^{p-2} u_\varepsilon= \lambda_\varepsilon \rho_\varepsilon |u_\varepsilon|^{p-2}u_\varepsilon &\text{ in } \Omega\\
u_\varepsilon=0 & \text{ on } \partial\Omega.
\end{cases}
\end{align}
where $\Omega$ is a bounded open set in $\mathbb R^n$,  $n\geq 1$ and with Lipschitz boundary.  Here, for $p>1$, $\Delta_p u :=\mathop{\text{\normalfont div}}(|\nabla u|^{p-2} u)$ denotes the $p-$Laplacian of $u$, and $\lambda_\varepsilon\in \mathbb R$ is an eigenvalue of \eqref{eq.p.eps} with eigenfunction $u_\varepsilon$. The sequences of weight functions $\{\rho_\varepsilon\}_{\varepsilon>0}$ and potential functions $\{V_\varepsilon\}_{\varepsilon>0}$ are uniformly positive  and $\rho_\varepsilon,V_\varepsilon \in L^q(\Omega)$ for  some appropriate number $q$ (see conditions \eqref{rel1}--\eqref{rel2}).

It is well-known that the sequence of uniformly bounded functions in $L^q(\Omega)$ given by $\{\rho_\varepsilon\}_{\varepsilon>0}$ converges when $\varepsilon \to 0$: there exists a function $\rho_0\in L^q(\Omega)$ such that
\begin{align*}
\rho_\varepsilon \rightharpoonup \rho_0 &\text{ weakly in  } L^q(\Omega), \text{when }q<+\infty,\\
\rho_\varepsilon \stackrel{*}{\rightharpoonup} \rho_0 &\text{ weakly* in  } L^\infty(\mathbb R^n), \text{when }q=+\infty.
\end{align*}
Analogously, there exists $V_0\in L^q(\Omega)$ for which the same convergences hold for the sequence of uniformly bounded function $\{V_\varepsilon\}_{\varepsilon>0}\in L^q(\Omega)$. Therefore, the limit problem of $\eqref{eq.p.eps}$ when $\varepsilon\to 0$ is  given by
\begin{align}\tag{$\mathcal{D}_0$}   \label{eq.p.0} 
\begin{cases}
-\Delta_p u_0 +  V_0 |u_0|^{p-2}u_0 = \lambda_0 \rho_0 |u_0|^{p-2}u_0 &\text{ in } \Omega\\
u_0=0 & \text{ on } \partial\Omega.
\end{cases}
\end{align}

In the case where the sequence $\{\rho_\varepsilon\}_{\varepsilon>0}$ is defined in terms of a $Q$-periodic function $\rho\in L^q(Q)$, where $Q$ is the unit cube in $\mathbb R^n$, as $\rho_\varepsilon(x):=\rho\left(\frac{x}{\varepsilon}\right)$, the limit function $\rho_0$ can be characterized as the average of $\rho$ over $Q$, i.e.,
$$
\rho_0=\overline\rho :=\frac{1}{|Q|}\int_Q \rho(x)\,dx.
$$
The same assertion holds when $\{V_\varepsilon\}_{\varepsilon>0}$ is given in terms of a periodic function $V\in L^q(Q)$. For more details on this topic, we refer the reader to Theorem 2.6 in \cite{CD}.

The existence of a sequence $\{\lambda_{k,\varepsilon}\}_{k\in\mathbb N}$ of variational eigenvalues for \eqref{eq.p.eps} is deduced using minimax methods for any given $\varepsilon>0$, see section \ref{sec.prel}. These eigenvalues satisfy $0<\lambda_{1,\varepsilon}<\lambda_{2,\varepsilon}\leq \lambda_{3,\varepsilon}\leq \cdots \nearrow +\infty$. Similarly, there exists a sequence $\{\lambda_{k,0}\}_{k\in\mathbb N}$ of variational eigenvalues for \eqref{eq.p.0} such that $0<\lambda_{1,0}<\lambda_{2,0}\leq \lambda_{3,0}\leq \cdots \nearrow +\infty$.

A natural question that arises is whether the eigenvalues of \eqref{eq.p.eps} converge to the eigenvalues of \eqref{eq.p.0} as $\varepsilon\to 0$. Such problems are closely linked to the field of \emph{homogenization theory} and the \emph{complete continuity of eigenpairs}. (For further information, see \cite{A, LY, OSY} and related literature.) Once the convergence $\lambda_{k,\varepsilon} \to \lambda_{k,0}$ has been established, a significant issue is to estimate the uniform rate of convergence in terms of $\varepsilon$ and $k$.

Under the same assumptions, when $\Omega\subset \mathbb R^n$, $n\geq 2$, we also   consider the eigenvalue problem with Neumann boundary condition
\begin{align}\tag{$\mathcal{N}_\varepsilon$}   \label{eq.n.eps} 
\begin{cases}
-\Delta_p u_\varepsilon + V_\varepsilon |u|_\varepsilon^{p-2} u_\varepsilon= \lambda_\varepsilon \rho_\varepsilon |u_\varepsilon|^{p-2}u_\varepsilon &\text{ in } \Omega\\
|\nabla u_\varepsilon|^{p-2}\nabla u_\varepsilon \cdot \eta =0 & \text{ on } \partial\Omega,
\end{cases}
\end{align}
where $\eta$ denotes the outward unit normal. When $\varepsilon\to 0$,  the following limit problem is obtained
\begin{align}\tag{$\mathcal{N}_0$}   \label{eq.n.0} 
\begin{cases}
-\Delta_p u_0 +  V_0 |u_0|^{p-2}u_0 = \lambda_0 \rho_0 |u_0|^{p-2}u_0 &\text{ in } \Omega\\
|\nabla u_0|^{p-2}\nabla u_0 \cdot \eta =0 & \text{ on } \partial\Omega.
\end{cases}
\end{align}

Problems of this kind have many practical applications in fields such as population biology, the Schr\"odinger operator, and shape optimization (see \cite{A, LY, OSY}). The literature provides a wealth of information regarding various properties of the eigenvalues and eigenfunctions of these equations. Notably, \cite{BH, CR} offer a broad range of weights.

Let us now review some of the existing results regarding the convergence of eigenvalues.  In the linear case (i.e., $p=2$) the convergence $\lambda_{k,\varepsilon}\to \lambda_{k,0}$ as $\varepsilon\to 0$ was obtained in different settings of $V_\varepsilon,\rho_\varepsilon\in L^q(\Omega)$: when $q=\infty$ and $V_\varepsilon=0$ with Dirichlet boundary condition, convergence of the spectrum is obtained for instance in \cite{A}; the case of Neumann boundary condition was studied in \cite{LY}; for the Steklov case we refer to \cite{S}. In the Dirichlet settings, the case $V_\varepsilon=0$ and $q>\frac{n}{2}$ was dealt in \cite{WYZ}; the case of integrable functions $V_\varepsilon$ and $\rho_\varepsilon$ in the one-dimensional equation was treated in \cite{CQ}.

In the linear periodic case with $q=\infty$ and $V_\varepsilon=0$, an estimate of the order of convergence can be found in \cite{OSY} both for Dirichlet and Neumann boundary conditions, where the authors prove that $|\lambda_{k,\varepsilon} - \lambda_{k,0}| \leq C\sqrt{\varepsilon} \lambda_{k,\varepsilon} \lambda_{k,0}^2 /(1-\lambda_{k,0}\beta_{k,\varepsilon})$ where $0\leq \beta_{k,\varepsilon}<\lambda_{k,0}^{-1}$ and  $\lim_{\varepsilon\to 0}\beta_{k,\varepsilon}=0$  with $C$ a positive constant independent of $\varepsilon$ and $k$. For the same problem in the Dirichlet case, in \cite{SV}, the authors prove that rate is of order $\varepsilon$: $|\lambda_{k,\varepsilon} - \lambda_{k,0}| \leq C_k \varepsilon$ where $C_k$ is a positive constant depending of $k$ but independent of $\varepsilon$. See also \cite{KLS} for some results in this line of research. Finally, under the same assumptions, when  $n=1$ in \cite{CZ} the authors prove that
$|\lambda_{k,\varepsilon} - \lambda_{k,0}| \leq C k^3 \varepsilon$ where $C>0$ is independent of $k$ and $\varepsilon$. The linear problem with $\rho_\varepsilon=0$ and weight $V_\varepsilon$ in $L^q(\Omega)$, $q\geq  3n/(n+2)$ was treated in \cite{WZ} where the authors obtain that $|\lambda_{k,\varepsilon}-\lambda_{k,0}|\leq C\varepsilon k^{\frac{n}{2}(1+\gamma)}$ for some $\gamma>0$ and $C$ independent of $k$ and $\varepsilon$. Finally, when $V_\varepsilon=0$ and $q=\infty$, rates of the convergence for the eigenvalues with  Steklov case were obtained in \cite{S}.

In the nonlinear case (i.e. $p\neq 2$),  when $q=1$ and $p>n$, in \cite{CR} convergence of   eigenvalues was proved in the one-dimensional case; when $q>\frac{n}{p}$ and $1<p\leq n$, in \cite{CR} the authors also obtain   convergence of the first two eigenvalues; in the periodic case, when $V_\varepsilon=0$ and $q=\infty$, rates of the convergence were obtained in \cite{FPS0,FPS2, S2} for the Dirichlet and Neumann case, giving  that $|\lambda_{k,\varepsilon}-\lambda_{k,0}|\leq C\varepsilon k^\frac{p+1}{n}$ with $C>0$ independent of $k$ and $\varepsilon$; finally,  the case of bounded sign changing weights was treated in \cite{FPS, FPS3}.

The primary objective of this manuscript is to investigate the convergence rates of eigenvalues in the homogenization problem for a broad range of values of $q$, where $V_\varepsilon,\rho_\varepsilon\in L^q (\Omega)$. To achieve this, we conduct a rigorous analysis of rapidly oscillating integrals and their relationship with the variational formulation of eigenvalues.

We describe now with more precision our main results. To this end, for each $k\in \mathbb N$ and $\varepsilon>0$ denote by $\lambda_{k,\varepsilon}$ the $k-$th variational eigenvalues at level $\varepsilon$  and by $\lambda_{k,0}$, the $k-$th variational eigenvalue of limit problems, that is, eigenvalues of \eqref{eq.p.eps},\eqref{eq.n.eps} or \eqref{eq.p.0},\eqref{eq.n.0} as appropriate.

Our first result concerns general sequences of weight functions. Let $\{\rho_\varepsilon\}_{\varepsilon>0}$ and $\{V_\varepsilon\}_{\varepsilon>0}$ be general sequences in $L^q(\Omega)$ (i.e., without periodicity assumptions)  with 
\begin{align} 
\frac{n}{p} < q\leq \infty &\quad \text{ when }1<p\leq n  \label{rel1}\\
q=1 &\quad \text{ when } p>n \label{rel2}
\end{align}
converging weakly in $L^q(\Omega)$ when $q<\infty$ (resp., weakly* in $L^\infty(\Omega)$ when $q=\infty$) to $\rho_0$ and $V_0$.
In Theorem \ref{teo.d.sinorden} (for the Dirichlet case) and Theorem \ref{teo.n} (for the Neumann case) we prove that for each $k\in \mathbb N$, the $k-$th variational eigenvalue of \eqref{eq.p.eps} and \eqref{eq.n.eps} converge as $\varepsilon\to 0$ in the sense that
$$
\lim_{\varepsilon\to 0} \lambda_{k,\varepsilon} = \lambda_{k,0}.
$$
When $\{\rho_\varepsilon\}_{\varepsilon>0}$ and $\{V_\varepsilon\}_{\varepsilon>0}$ have the form $\rho_\varepsilon(x):=\rho(\tfrac{x}{\varepsilon})$ and $V_\varepsilon(x):=V(\tfrac{x}{\varepsilon})$, where $\rho$ and $V$ are positive $Q-$periodic functions in the unit cube $Q$ of $\mathbb R^n$, these sequences become rapidly oscillating as $\varepsilon\to 0$.  In this case, a precise estimate of  related rapidly oscillating integrals (see Propositions \ref{prop.dirichlet} and \ref{prop.neumann}) enable us to obtain the  convergence rates of eigenvalues. Indeed, in Theorem \ref{teo.d} we prove the following result in the Dirichlet case:
\begin{align} \label{eqqi}
|\lambda_{k,0}-\lambda_{k,\varepsilon}| &\leq  
C  \varepsilon^\alpha \left(  \lambda_{k,0} \|V_\varepsilon - \bar V\|_{L^q(\Omega)} +     \lambda_{k,0}^2 \|\rho_\varepsilon - \bar \rho\|_{L^q(\Omega)}   \right)
\end{align}
holds for all $0<\varepsilon<(2\textbf{C} \lambda_{k,0})^{-\frac{1}{\alpha}}$, where 
$$
\alpha=
\begin{cases}
p-\frac{n}{q} & \text{ when } \frac{n}{p}<q\leq \infty  \text{ and } 1<p\leq n\\
1-\frac{n}{p} & \text{ when } q=1 \text{ and } p>n,
\end{cases}
$$
and $C,\textbf{C}$ are computable positive constants depending only of $p$,  $\rho$, $V$ and $\Omega$, and independent of $k$ and $\varepsilon$. Moreover, by using precise estimates on the eigenvalues $\lambda_{k,\varepsilon}$ and $\lambda_{k,0}$, we obtain in Corollary \ref{cor.d} the rates of convergence in terms of $\varepsilon$ and $k$.

In the case of Neumann eigenvalues for $1<p\leq n$ and $\frac{n-1}{p-1}<q\leq \infty$, in  Theorem \ref{teo.n}  we prove that an estimate similar to \eqref{eqqi} holds with $\alpha=1-\frac{1}{q}$.

We stress that these results generalize many of the estimates on eigenvalue homogenization known in the literature.

We conclude the description of our result with a theorem regarding an eigenvalue problem in dimension one. In this case,  for positive $1-$periodic functions $a\in L^\infty$ and $\rho\in L^q$ with $p>1$ and $q\geq 1$, we consider, for each $\varepsilon>0$,  the following eigenvalue problem involving a  weighted $p-$Laplacian 
\begin{align} \label{eq.p.a.intro}
\begin{cases}
-(a(\tfrac{x}{\varepsilon})|u_\varepsilon|^{p-2}u_\varepsilon')'=\lambda_\varepsilon \rho(\tfrac{x}{\varepsilon}) |u_\varepsilon|^{p-2}u_\varepsilon &\quad \text{ in } (0,1)\\
u_\varepsilon(0)=u_\varepsilon(1)=0 
\end{cases}
\end{align}
for which, as $\varepsilon\to 0$ the following limit equation is obtained
\begin{align} \label{eq.p.a.lim.intro}
\begin{cases}
-(a^* | u_0|^{p-2}u_0 ')'=\lambda_0 \bar \rho |u_0|^{p-2}u_0 &\quad \text{ in } (0,1)\\
u_0(0)=u_0(1)=0 
\end{cases}
\end{align}
where $a^* = \left(\int_0^1 a(t)^{-\frac{1}{p-1}}\,dt \right)^{-(p-1)}$ and $\bar \rho=\int_0^1 \rho(t)\,dt$.

In this case, since this problem is related to a  weighted  oscillating operator, the   arguments applied in the previous results do not work. However, by means  of a suitable change of variables we can reduce problem \eqref{eq.p.a.intro} to an equation involving only the $p-$Laplacian and an oscillating weight function in the right hand side. Then, denoting by $\lambda_{k,\varepsilon}$ and $\lambda_{k,0}$ the $k-$th variational eigenvalue of \eqref{eq.p.a.intro} and \eqref{eq.p.a.lim.intro}, respectively, in Theorem \ref{teo.1d}  we prove that $\lambda_{k,\varepsilon} \to \lambda_{k,0}$ as $\varepsilon\to 0$ and moreover, 
\begin{align*}
|\lambda_{k,0}-\lambda_{k,\varepsilon}|\leq 
\begin{cases}
C L^{-p} \|a\|_{L^\infty(0,1)}^\frac{1}{p-1} \|\rho\|_{L^q(\Omega)}  k^{2p}\varepsilon  &\text{ if } \varepsilon^{-1}\in \mathbb N\\
C L^{-p} \left( \|a\|_{L^\infty(0,1)}^\frac{1}{p-1} \|\rho\|_{L^q(\Omega)}   k^{2p}\varepsilon +  k^p \varepsilon \right) &\text{ if } \varepsilon^{-1}\not \in \mathbb N
\end{cases}
\end{align*}
holds for  $\varepsilon<(2\textbf{C} \lambda_{k,0})^{-1}$, where $L=(a^*)^\frac{1}{1-p}$ and $C,{\bf C}$ are computable positive constants  independent of $k$ and $\varepsilon$.

Finally, we remark that all the results stated in this paper hold true if we replace $\Delta_p u_\varepsilon$, $\varepsilon>0$ by a quasi-linear operator of the form
$$
\mathcal{L}u_\varepsilon :=\mathop{\text{\normalfont div}}(a(x,\nabla u_\varepsilon))
$$
(i.e., independent of $\varepsilon$ in the first parameter) where, for any $\xi\in\mathbb R^n$ and $x\in\Omega$ the function $a\colon \Omega\times\mathbb R^n\to \mathbb R^n$ satisfies the following conditions: 
\begin{itemize}
\item[$h_0$)] $a(\cdot,\cdot)$ is a Carath\'eodory function odd in the second variable,
\item[$h_1$)] continuity: for some $c_1>0$ it holds that $c_1|\xi|^p\leq a(x,\xi)\cdot \xi$,
\item[$h_2$)] coercivity: for some $c_2>0$ it holds that $|a(x,\xi)|\leq c_2|\xi|^{p-1}$,
\item[$h_3$)] monotonicity: $0\leq (a(x,\xi_1)-a(x,\xi_2))\cdot (\xi_1-\xi_2)$,
\item[$h_4$)] $p-$homogeneity: $a(x,t\xi)=t^{p-1}a(x,\xi)$ for all $t>0$,
\item[$h_5$)] equi-continuity: 
$$|a(x,\xi_1)-a(x,\xi_2)| \leq c\Psi^\frac{p-1-\delta}{p}[(a(x,\xi_1)-a(x,\xi_2)\cdot(\xi_1-\xi_2)]^\frac{\delta}{p},$$
\item[$h_6$)] cyclical monotonicity: $\sum_{i=1}^k a(x,\xi_1)\cdot (\xi_{i+1}-\xi_i)\leq 0$, with $\xi_1=\xi_{k+1}$,
\item[$h_7$)] strict monotonicity: $c_1|\xi_1-\xi_2|^\gamma\Psi^{1-\frac{\gamma}{p}} \leq (a(x,\xi_1)-a(x,\xi_2))\cdot (\xi_1-\xi_2)$,
\end{itemize} 
where we have denoted  $\delta=\min\{p/2,p-1\}$, $\Psi=\Psi(x,\xi_1,\xi_2)=a(x,\xi_1)\cdot\xi_1 + a(x,\xi_2)\cdot\xi_2$ for all  $\xi_1,\xi_2\in\mathbb R^n$ and $x\in\Omega$, $\gamma=\max\{2,p\}$. See \cite[Section 3.4]{BC} for  a detailed discussion of these conditions.

For simplicity in the notation and the exposition, we have chosen to write the manuscript for the $p-$Laplacian.

The paper is organized as follows: in Section \ref{sec.prel} we introduce some preliminaries and notation, in Section \ref{sec2} we prove our key results on rapidly oscillating integrals, in Section \ref{sec3} we give the proof of our main results and Section \ref{sec4} is devoted to deal with a one-dimensional equation for a  weighted operator.

\section{Preliminaries} \label{sec.prel}
\subsection{Notation}
Given an open set $\Omega\subset \mathbb R^n$, $n\geq 1$ and $1< p<\infty$, we consider the Sobolev space
$$
W^{1,p}(\Omega)=\{u\in L^p(\Omega)\colon |\nabla u|\in L^p(\Omega) \}.
$$
This space is a separable and reflexive Banach space endowed with the norm 
$$
\|u\|_{W^{1,p}(\Omega)}=(\|u\|_{L^p(\Omega)}+ \|\nabla u\|_{L^p(\Omega)})^\frac1p
$$
where $\|u\|_{L^p (\Omega)} = \left(\int_\Omega |u|^p\,dx\right)^\frac1p$.

As usual, we denote by $W^{1,p}_0(\Omega)$ the closure of the of $C^\infty_c(\Omega)$ with respect to the norm of $W^{1,p}(\Omega)$. When $\partial\Omega\in C^1$, this space coincides with the set of functions with zero trace. Note that, by the Poincar\'e's inequality in $W^{1,p}_0(\Omega)$, $\| \nabla u\|_{L^p(\Omega)}$ becomes an equivalent norm in $W^{1,p}_0(\Omega)$.

\noindent Given $p>1$ we denote its \emph{conjugate exponent} by $p'=\frac{p}{p-1}$ ($p'=\infty$ if $p=1$).

\noindent We denote the unit cube in $\mathbb R^n$ by $Q$. For any $\varepsilon>0$, $Q_\varepsilon$ represents the scaled cube $\varepsilon Q$.

Given a bounded set $\Omega\subset \mathbb R^n$ we denote its \emph{diameter} by $\text{diam}(\Omega)$ and its \emph{$n-$dimensional Hausdorff measure} by $|\Omega|=\mathcal{H}^n(\Omega)$.

%\color{red}
%Let $\Omega\subset \mathbb R^n$ be an open set. We say that a sequence  $\{u_k\}_{k\in\mathbb N}$ of functions in $L^q(\Omega)$ with $1\leq q<\infty$ \emph{weakly converges} to a function $u\in L^q(\Omega)$ if
%$$
%\lim_{k\to\infty} \int_\Omega u_k v \,dx =\int_\Omega u v\,dx \qquad \forall v\in L^{q'}(\Omega)
%$$
%We say that a sequence of functions $\{u_k\}_{k\in\mathbb N}$ of functions in $L^\infty(\Omega)$ \emph{weakly* converges} to a function $u\in L^\infty(\Omega)$ if
%$$
%\lim_{k\to\infty} \int_\Omega u_k v \,dx =\int_\Omega u v\,dx \qquad \forall v\in L^1(\Omega).
%$$
%\normalcolor
\subsection{Eigenpairs of the $p-$Laplacian}
Given a bounded open set $\Omega\subset \mathbb R^n$, $n\geq 1$, with Lipschitz boundary, we consider the following eigenvalue problem with Dirichlet boundary condition
\begin{align} \label{eq.p}
\begin{cases}
-\Delta_p u +V|u|^{p-2}u = \lambda \rho |u|^{p-2}u &\text{ in } \Omega\\
u=0 & \text{ on } \partial\Omega.
\end{cases}
\end{align}
Here, for $1<p<\infty$ the weight functions $\rho$ and $V$ belong to $L^q(\Omega)$, with $q$ satisfying \eqref{rel1}--\eqref{rel2}, and are uniformly positive \text{a.e.} in $\Omega$.

We say that $\lambda\in \mathbb R$ is an \emph{eigenvalue} of \eqref{eq.p} if there exists a nontrivial function $u\in W^{1,p}_0(\Omega)$ such that for every $v\in W^{1,p}_0(\Omega)$
\begin{align} \label{debil}
\int_\Omega |\nabla u|^{p-2}\nabla u \cdot \nabla v\,dx +\int_\Omega V |u|^{p-2}uv\,dx = \lambda \int_\Omega \rho |u|^{p-2}uv\,dx.
\end{align}

The $k-$th variational eigenvalue of \eqref{eq.p} is given by
\begin{equation} \label{R.quot}
\lambda_k=\inf_{C\in \Gamma_k} \sup_{v\in C} \frac{\int_\Omega |\nabla v|^p\,dx + \int_\Omega V |v|^p\,dx}{\int_\Omega \rho|v|^p\,dx}
\end{equation}
where $\Gamma_k=\{C\subset W^{1,p}_0(\Omega)\colon C \text{ compact}, C=-C, \gamma(C)\geq k\}$ 
and $\gamma(C)$ is the \emph{Krasnoselskii genus} of the set $C$. In general, this sequence does not exhaust the spectrum unless $p=2$ or $n=1$. See \cite{L} for details.

Since $V,\rho\geq c_0>0$ \text{a.e.} in $\Omega$ and $V,\rho \in L^q(\Omega)$ with $q$ such that \eqref{rel1} holds,  by using H\"older's and Sobolev's inequalities it follows that \eqref{R.quot} is well defined  for any $v\in W^{1,p}_0(\Omega)$.

Under the same considerations on $\rho$ and $V$, we also consider the eigenvalue problem with Neumann boundary condition
\begin{align} \label{eq.n}
\begin{cases}
-\Delta_p u +V|u|^{p-2}u =\lambda  \rho |u|^{p-2}u &\text{ in } \Omega\\
|\nabla u|^{p-2}\nabla u\cdot \eta =0 & \text{ on } \partial\Omega,
\end{cases}
\end{align}
where $\eta$ is the outward normal to $\partial\Omega$.

In this case the $k-$th variational eigenvalue of \eqref{eq.n} is given by
$$
\lambda_k=\inf_{C\in \Gamma_k} \sup_{v\in C} \frac{\int_\Omega |\nabla v|^p\,dx + \int_\Omega V |v|^p\,dx}{\int_\Omega \rho|v|^p\,dx}
$$
where $\Gamma_k=\{C\subset W^{1,p}(\Omega)\colon C \text{ compact}, C=-C, \gamma(C)\geq k\}$.

\subsection{Some useful inequalities}
Throughout this article, we will frequently utilize several well-known inequalities: Sobolev's inequality, Poincaré's inequality, Trace inequality, and Morrey's inequality. For their precise statements and proofs, we refer, for instance, to \cite{Leoni}. Regarding the applicability of these inequalities, we define the following critical exponents:
\begin{align*}
p^*=
\begin{cases}
\frac{np}{n-p} &\text{ if } p<n\\
\infty &\text{ if } p\geq n
\end{cases}
\qquad \qquad 
p_*=
\begin{cases}
\frac{(n-1)p}{n-p} &\text{ if } p<n\\
\infty &\text{ if } p\geq n.
\end{cases}
\end{align*}

%\color{red}
%Given $u\in W^{1,p}(\Omega)$, we have:
%\begin{itemize}  
%\item[(i)] \emph{Sobolev's inequality}: there exists $C=C(p,\Omega)$ such that
%$$
%\|u\|_{L^r(\Omega)} \leq C \|u\|_{W^{1,p}(\Omega)}
%$$
%holds for  $1\leq r\leq \frac{np}{n-p}$ when $1\leq p<n$, and for $n\leq r <\infty$ when $p=n$.  
%\\
%When $u\in W^{1,p}_0(\Omega)$, the inequality
%$$
%\|u\|_{L^r(\Omega)} \leq C \|\nabla u\|_{L^p(\Omega)}
%$$
%holds  without assuming regularity on $\partial\Omega$.
%
%
%\item[(ii)] \emph{Poincar\'e's inequality:} there exists $C=C(p,\Omega)$ such that
%$$
%\|u-\bar u\|_{L^r(\Omega)} \leq C  \|\nabla u\|_{L^p(\Omega)}, 
%$$
%where $\bar u=\frac{1}{|\Omega|}\int_\Omega u\,dx$ is the average of $u$ in $\Omega$ and  $1\leq r\leq p^*$ with strict second inequality if $p=n$, that is, $1\leq r \leq \frac{np}{n-p}$ when $1\leq p<n$, $1\leq r <\infty$ when $p=n$ and $r=\infty$ when $p>n$.
%
%\item[(iii)] \emph{Trace inequality}: there exists $C=C(p,\Omega)$ such that
%$$
%\|u\|_{L^r(\partial\Omega)} \leq C \|u\|_{W^{1,p}(\Omega)}
%$$
%where $1\leq r\leq p_*$ with strict second inequality if $p=n$.
%
%\item[(iv)] \emph{Morrey's inequality:} when $p>n$ it holds that
%$$
%\|u\|_{C^{0,1-\frac{n}{p}}(\overline\Omega)}\leq C \|u\|_{W^{1,p}(\Omega)}
%$$
%where $C=C(p,\Omega)$.
%\end{itemize}
%
%
%
%We remark that in the one-dimensional case, given $u\in W^{1,p}(\Omega)$ with $\Omega=(a,b)\subset \mathbb R$ a bounded interval, it holds that $u\in C^{0,1-\frac{1}{p}}(\bar\Omega)$ for any $p>1$, and $\|u\|_{C^{0,1-\frac{n}{p}}(\overline \Omega)}\leq C \|u\|_{W^{1,p}(\Omega)}$.
%
% 
%\normalcolor

\section{Estimates on rapidly oscillating integrals} \label{sec2}

\noindent Along this section $\Omega\subset \mathbb R^n$ denotes an open bounded set with Lipschitz boundary. We denote  $\rho_\varepsilon(x):=\rho(\tfrac{x}{\varepsilon})$, being $\rho\in L^q(Q)$  a $Q-$periodic function. Then, $\rho_\varepsilon \rightharpoonup \bar \rho$ weakly (resp. weakly*) in $L^q(\Omega)$, when $1\leq q<\infty$ (resp. $q=\infty$),  with $\bar \rho$ the average of $\rho$ in $Q$.

The following result details Poincar\'e's inequality on the cube $Q_\varepsilon$, illustrating how the constant depends on $\varepsilon$. We  omit the proof since it is a direct consequence of applying the classical Poincar\'e's inequality with a stretching argument.

\begin{prop} \label{prop1}
For every $u\in W^{1,p}(Q_\varepsilon)$, there exists $c>0$ such that
$$
\|u-\bar u_\varepsilon\|_{L^r(Q_\varepsilon)} \leq c   \varepsilon^\beta \|\nabla u\|_{L^p(Q_\varepsilon)}
$$
where 
\begin{align*}
\beta= 
\begin{cases}
1+\frac{n}{r}-\frac{n}{p} &\text{ when } p \leq n \text{ and } 1\leq r \leq \frac{np}{n-p},\\
1-\frac{n}{p} &\text{ when } p> n \text{ and } r=\infty,
\end{cases}
\end{align*}
and $\bar u_\varepsilon$ is the average of $u$ in $Q_\varepsilon$.
\end{prop}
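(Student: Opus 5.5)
The argument is a scaling (stretching) reduction to the unit cube. We first invoke the classical Poincaré–Sobolev inequality on the fixed Lipschitz domain $Q$. For $p\le n$ and $1\le r\le \frac{np}{n-p}$ (with $r<\infty$ when $p=n$), there is $c=c(n,p,r)>0$ such that
$$
\|w-\bar w\|_{L^r(Q)}\le c\,\|\nabla w\|_{L^p(Q)}\qquad \text{for all } w\in W^{1,p}(Q),
$$
where $\bar w$ is the average of $w$ over $Q$. For $p>n$, Morrey's inequality combined with Poincaré gives the same bound with $r=\infty$. We take these as the standard statements referred to in the preliminaries; the compactness of the embedding $W^{1,p}(Q)\hookrightarrow L^r(Q)$ for subcritical $r$ yields them by the usual contradiction argument. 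The critical case $r=\frac{np}{n-p}$ follows from Sobolev's inequality applied to $w-\bar w$ together with the subcritical Poincaré inequality.

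Next, given $u\in W^{1,p}(Q_\varepsilon)$, we define $w(y):=u(\varepsilon y)$ for $y\in Q$. Then $w\in W^{1,p}(Q)$, $\nabla w(y)=\varepsilon\,(\nabla u)(\varepsilon y)$, and $\bar w=\bar u_\varepsilon$, since averages are invariant under dilation. The change of variables $x=\varepsilon y$, $dx=\varepsilon^n\,dy$, gives
$$
\|w-\bar w\|_{L^r(Q)}=\varepsilon^{-\frac nr}\|u-\bar u_\varepsilon\|_{L^r(Q_\varepsilon)},\qquad
\|\nabla w\|_{L^p(Q)}=\varepsilon^{1-\frac np}\|\nabla u\|_{L^p(Q_\varepsilon)} .
$$
When $r=\infty$, the left factor is simply $\varepsilon^0=1$.

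Substituting these identities into the unit-cube inequality gives
$$
\|u-\bar u_\varepsilon\|_{L^r(Q_\varepsilon)}\le c\,\varepsilon^{1+\frac nr-\frac np}\|\nabla u\|_{L^p(Q_\varepsilon)}.
$$
For $p>n$ and $r=\infty$, the exponent is $1-\frac np$. The constant $c$ is the one on $Q$, so it is independent of $\varepsilon$ and of $u$.

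There is no real obstacle in this argument. The only point requiring care is the borderline case $p=n$, where $r$ must be finite because $W^{1,n}\not\hookrightarrow L^\infty$. This is consistent with the convention $\frac{np}{n-p}=\infty$, read as $r<\infty$.
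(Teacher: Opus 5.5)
Your proposal is correct and follows the route the paper indicates: the paper omits the proof as a direct consequence of the classical Poincar\'e (Sobolev/Morrey) inequality on the unit cube combined with the dilation $x=\varepsilon y$, which produces exactly the factors $\varepsilon^{-n/r}$ and $\varepsilon^{1-n/p}$ you computed. Your remark that $r<\infty$ is needed when $p=n$ is also the correct reading of the statement.
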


%\color{red}
%\begin{proof} 
%Let $u\in W^{1,r}(Q_\varepsilon)$. Assume first that $1\leq p \leq n$, and let $r$ be such that $1\leq r \leq \frac{np}{n-p}$ when $p<n$,  $1\leq r <\infty$ when $p=n$. We can assume that $\bar u_\varepsilon=0$. If we denote $u_\varepsilon(x)=u(\varepsilon x)$, we have that $u_\varepsilon \in W^{1,p}(Q)$ and by a change of variables we get
%\begin{align*}
%\|u\|_{L^r(Q_\varepsilon)}=\left(\int_{Q_\varepsilon} |u|^r\,dx \right)^\frac1r &= \left(\varepsilon^n \int_Q |u_\varepsilon|^r \,dx \right)^\frac1r \leq   \varepsilon^\frac{n}{r} c \left(\int_Q |\nabla u_\varepsilon|^p \,dx\right)^\frac{1}{p}\\
%&\leq 
%  c \varepsilon^\frac{n}{r}  \left( \varepsilon^{p-n}\int_{Q_\varepsilon} |\nabla u|^p \,dx\right)^\frac{1}{p}=c \varepsilon^{1+\frac{n}{r}-\frac{n}{p}} \|\nabla u\|_{L^p(Q_\varepsilon)}.
%\end{align*}
%When $p> n$, by inequality 11.26 in \cite{Leoni}, we have that
%$$
%|u(x)-\bar u_\varepsilon|\leq \frac{np}{p-n} \varepsilon^{1-\frac{n}{p}} \|\nabla u\|_{L^p(Q_\varepsilon)}
%$$
%for any $x\in Q_\varepsilon$, from where the desired inequality follows.
%\end{proof}
%\normalcolor

The following proposition states the behavior of rapidly oscillating integrals  related to the Dirichlet eigenvalue problem.
\begin{prop} \label{prop.dirichlet}
Let $p$ and $q$ satisfying condition \eqref{rel1}--\eqref{rel2}. Then
$$
\left|\int_\Omega(\rho_\varepsilon-\bar\rho) |u|^p\,dx \right|  \leq   C \varepsilon^\alpha \|\rho_\varepsilon - \bar \rho\|_{L^q(\Omega)}   \|\nabla u\|_{L^p(\Omega)}^p
$$
holds for all $u\in W^{1,p}_0(\Omega)$ and $\varepsilon\ll 1$, where 
\begin{align} \label{alfa}
\alpha=
\begin{cases}
p-\frac{n}{q} & \text{ when } q> \frac{n}{p} \text{ and } 1<p\leq n\\
1-\frac{n}{p} & \text{ when } q= 1 \text{ and } p>n
\end{cases}
\end{align}
and $C$ is a constant depending only of $p$  and $\Omega$.
\end{prop}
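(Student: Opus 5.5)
Extend $u\in W^{1,p}_0(\Omega)$ by zero outside $\Omega$, so $u\in W^{1,p}(\mathbb R^n)$ with the same gradient norm. Cover the support by the lattice of cubes $Q_\varepsilon^j=\varepsilon(j+Q)$, $j\in\mathbb Z^n$, and let $I_\varepsilon$ be the finite set of indices meeting $\Omega$. The function $\rho_\varepsilon-\bar\rho$ has zero mean on every cube $Q_\varepsilon^j$ by periodicity. Hence on each cube we may subtract any constant from $|u|^p$. Choosing the cube average $c_j$ of $|u|^p$ gives
$$
\int_\Omega(\rho_\varepsilon-\bar\rho)|u|^p\,dx=\sum_{j\in I_\varepsilon}\int_{Q_\varepsilon^j}(\rho_\varepsilon-\bar\rho)\bigl(|u|^p-c_j\bigr)\,dx .
$$
Here $\rho_\varepsilon-\bar\rho$ is understood as the periodic function on all of $\mathbb R^n$. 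Its $L^q$ norm over $\bigcup_{j\in I_\varepsilon} Q^j_\varepsilon$ is comparable to $\|\rho_\varepsilon-\bar\rho\|_{L^q(\Omega)}$ for $\varepsilon\ll1$, by counting periods and using the Lipschitz boundary. This is where $\varepsilon\ll 1$ enters.

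On each cube, Hölder gives the bound $\|\rho_\varepsilon-\bar\rho\|_{L^q(Q^j_\varepsilon)}\,\||u|^p-c_j\|_{L^{q'}(Q^j_\varepsilon)}$. To estimate the second factor, set $w=|u|^p\in W^{1,1}$, with $|\nabla w|\le p|u|^{p-1}|\nabla u|$, and apply Proposition \ref{prop1} on $Q_\varepsilon^j$. This is the main step, and I would try it first in the case $1<p\le n$.

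\textbf{Option (a).} Apply Proposition \ref{prop1} directly to $w$ with exponent $1$, i.e. in $W^{1,1}$ on the cube with $r=q'$ and $\beta=1+\frac n{q'}-n=1-\frac nq$. This requires $q'\le \frac{n}{n-1}$. It gives
$$
\|w-c_j\|_{L^{q'}}\le c\,\varepsilon^{1-n/q}\int_{Q^j_\varepsilon}|u|^{p-1}|\nabla u|.
$$
The power of $\varepsilon$ obtained this way is only $1-n/q$, not the claimed $\alpha=p-n/q$.

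\textbf{Option (b).} To reach $\alpha=p-n/q$, write $|u|^p-c_j$ in terms of $v=u-\bar u_j$ and use Proposition \ref{prop1} for $u$ itself in $W^{1,p}$. This gives $\|u-\bar u_j\|_{L^{pq'}(Q_\varepsilon^j)}\le c\,\varepsilon^{1+\frac{n}{pq'}-\frac np}\|\nabla u\|_{L^p(Q^j_\varepsilon)}$. The condition $pq'\le p^*$ is exactly $q\ge n/p$, which matches \eqref{rel1}. Raising to the power $p$ yields $\varepsilon^{p+\frac n{q'}-n}=\varepsilon^{p-\frac nq}=\varepsilon^\alpha$, which is precisely the claimed exponent.

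So I would organise the estimate around the elementary inequality $\bigl||a|^p-|b|^p\bigr|\lesssim |a-b|^p+|a-b||b|^{p-1}$. The pure term $|u-\bar u_j|^p$ gives exactly $\varepsilon^\alpha\|\nabla u\|^p_{L^p(Q^j_\varepsilon)}$. I expect the main obstacle to be the cross term with $|\bar u_j|^{p-1}$. I would first try to absorb it by choosing the constant $c_j$ cleverly, e.g. $c_j=|\bar u_j|^p$ or an optimal constant. Failing that, I would accept the weaker bound that the structure of the statement seems to intend, namely $\bigl\||u|^p-c_j\bigr\|_{L^{q'}}\lesssim \|u-\bar u_j\|_{L^{pq'}}^p$, and sum to get $\varepsilon^\alpha\|\nabla u\|_p^p$.

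After summing over $j$, apply discrete Hölder with exponents $q$ and $q'$: the $\rho$-factors combine into $\|\rho_\varepsilon-\bar\rho\|_{L^q}$, while the gradient factors sum since $\sum_j\|\nabla u\|_{L^p(Q^j)}^{p q'}\le(\sum_j\|\nabla u\|_{L^p(Q^j)}^p)^{q'}$ because $q'\ge1$.

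For $p>n$, $q=1$, the argument is simpler. Use the $L^\infty$ case of Proposition \ref{prop1} (Morrey): $\operatorname{osc}_{Q^j_\varepsilon}u\le c\,\varepsilon^{1-n/p}\|\nabla u\|_{L^p(Q^j_\varepsilon)}$. Then bound $\sup_{Q_\varepsilon^j}\bigl||u|^p-c_j\bigr|\le p\|u\|_\infty^{p-1}\operatorname{osc}u$. By Morrey/Poincaré on $W^{1,p}_0(\Omega)$, $\|u\|_\infty\le C\|\nabla u\|_{L^p(\Omega)}$. Finally sum with $\|\rho_\varepsilon-\bar\rho\|_{L^1(Q^j_\varepsilon)}$, which yields $\varepsilon^{1-n/p}\|\rho_\varepsilon-\bar\rho\|_{L^1}\|\nabla u\|_p^p$.
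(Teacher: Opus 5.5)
Your argument for $1<p\le n$ fails at the step you flagged, and it cannot be repaired, because the target exponent is wrong when $p-n/q>1$.

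Your fallback bound $\bigl\||u|^p-c_j\bigr\|_{L^{q'}(Q^j_\varepsilon)}\lesssim\|u-\bar u_j\|^p_{L^{pq'}(Q^j_\varepsilon)}$ is false for every choice of the constant $c_j$. If $u=M+\eta\psi$ on the cube, with $\psi$ nonconstant and $0<\eta\ll M$, the left side is of order $M^{p-1}\eta$ while the right side is of order $\eta^p$. So the linear term $p|\bar u_j|^{p-2}\bar u_j\,(u-\bar u_j)$ must be kept, and it is genuinely of order $\varepsilon$, not $\varepsilon^\alpha$. Concretely, take $n=p=2$ and $q=\infty$ (so $\alpha=2$), with $\rho(y)=2+\cos(2\pi y_1)$ and $\kappa_\varepsilon(x)=\cos(2\pi x_1/\varepsilon)$. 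Let $u=\phi\,(1+\varepsilon\kappa_\varepsilon)$ with $0\neq\phi\in C^\infty_c(\Omega)$. Then $\|\nabla u\|_{L^2(\Omega)}=O(1)$ and $\|\rho_\varepsilon-\bar\rho\|_{L^\infty}=1$, but
\[
\int_\Omega(\rho_\varepsilon-\bar\rho)u^2\,dx=\int_\Omega\phi^2\bigl(\kappa_\varepsilon+2\varepsilon\kappa_\varepsilon^2+\varepsilon^2\kappa_\varepsilon^3\bigr)\,dx=\varepsilon\|\phi\|_{L^2(\Omega)}^2+O(\varepsilon^2).
\]
No bound of the form $C\varepsilon^2\|\nabla u\|^2_{L^2}$ can accommodate this, and the leading term is exactly your cross term. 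The same test function shows the proposition is false whenever $p-n/q>1$, for instance for $q=\infty$ and any $1<p\le n$. For general $p$, take $\phi\ge0$ with $\phi^p\in C^2_c(\Omega)$ and use $|u|^p=\phi^p(1+p\varepsilon\kappa_\varepsilon+O(\varepsilon^2))$.

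The paper never splits off $u-\bar u_j$. It applies Proposition \ref{prop1} to $v=|u|^p$ viewed as a $W^{1,r}$ function with $r=\frac{n}{n+1-p}$, where $\|\nabla v\|_{L^r}\le C\|\nabla u\|_{L^p}^p$ by H\"older and Sobolev. This gives $\varepsilon^{1+n/q'-n/r}=\varepsilon^{p-n/q}$ on each cube. It then uses $\sum_z\|\nabla v\|_{L^r(Q_{z,\varepsilon})}\le\|\nabla v\|_{L^r(\Omega)}$, which is false for $r>1$. Summing correctly (the $L^{q'}$ norm over the cubes is the $\ell^{q'}$ norm of the cube norms) gives $\varepsilon^{p-n/q}$ only when $q'\ge r$, i.e. $q\le\frac{n}{p-1}$, i.e. $p-n/q\le1$. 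Otherwise comparing $\ell^{q'}$ with $\ell^r$ costs $(\#I_\varepsilon)^{1/q'-1/r}$ and leaves exactly $\varepsilon^1$, which is the same loss as your cross term.

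What your ingredients do prove is the estimate with $\varepsilon$ in place of $\varepsilon^\alpha$, and hence the stated one when $p-n/q\le1$. Keep the cross term and pair cube by cube, using $\|\rho_\varepsilon-\bar\rho\|_{L^q(Q^j_\varepsilon)}=\varepsilon^{n/q}\|\rho-\bar\rho\|_{L^q(Q)}$ instead of the global $\ell^q$--$\ell^{q'}$ H\"older. Apply discrete H\"older in $(p',p)$ together with $\sum_j|\bar u_j|^p\le\varepsilon^{-n}\|u\|^p_{L^p}$. The cross term then contributes $C\varepsilon\|\rho-\bar\rho\|_{L^q(Q)}\|u\|^{p-1}_{L^p}\|\nabla u\|_{L^p}$. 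Finally, $\|\rho-\bar\rho\|_{L^q(Q)}\le C\|\rho_\varepsilon-\bar\rho\|_{L^q(\Omega)}$ for $\varepsilon\ll1$.

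The same per-cube pairing shows you undercounted Option (a): for $q\ge n$ it gives $\varepsilon^{n/q}\cdot\varepsilon^{1-n/q}=\varepsilon$, not $\varepsilon^{1-n/q}$. Your treatment of $p>n$, $q=1$ is correct, precisely because there you keep the linear term through $\|u\|_\infty^{p-1}\,\mathrm{osc}\,u$.
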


\begin{proof}
Let $u\in W^{1,p}_0(\Omega)$ and extend it by 0 outside $\Omega$. We observe that 
\begin{align*}
v:=|u|^p\in W^{1,r}_0(\Omega) \text{ with }
\begin{cases}
r\leq \frac{n}{n+1-p} &\text{ when } p\leq n\\
r\leq p &\text{ when } p>n.
\end{cases}
\end{align*}
Indeed, when $p\leq n$ and  $r= \frac{n}{n+1-p}$ by H\"older's and Sobolev's inequality we get
\begin{align} \label{cota1}
\begin{split}
\int_\Omega |\nabla v|^r\,dx &\leq p^r \int_\Omega |u|^{r(p-1)} |\nabla u|^r\,dx\\
&\leq p^r \left(\int_\Omega |u|^{r(p-1)(\frac{p}{r})' }\,dx \right)^{\frac{1}{(p/r)'}} \left(\int_\Omega |\nabla u|^p\,dx \right)^\frac{r}{p}\\
&\leq p^r \left(\int_\Omega |u|^{p^*}\,dx  \right)^\frac{p-r}{p}    \left(\int_\Omega |\nabla u|^p\,dx \right)^\frac{r}{p}\\
&\leq C \left(\int_\Omega  |\nabla u|^p \,dx \right)^{\frac{n(p-1)}{p(n+1-p)}+\frac{r}{p}}\leq C \left(\int_\Omega  |\nabla u|^p \,dx \right)^r,
\end{split}
\end{align}
where $C$ is a positive constant independent of $q$.
\\
When $p>n$ and $r\leq p$, by Morrey's estimate we get that
\begin{align} \label{cota2}
\begin{split}
\int_\Omega |\nabla v|^r\,dx &\leq p^r \int_\Omega |u|^{r(p-1)} |\nabla u|^r\,dx \leq p^r \|u\|_{L^\infty(\Omega)}^{r(p-1)} \int_\Omega |\nabla u|^r\,dx\\
&\leq p^r |\Omega|^\frac{p-r}{p} \|u\|_{L^\infty(\Omega)}^{r(p-1)} \left(\int_\Omega |\nabla u|^p\,dx\right)^\frac{r}{p}\leq C \left(\int_\Omega |\nabla u|^p\,dx\right)^{r}.
\end{split}
\end{align}

Denote by $I_\varepsilon$ the set of all $z\in \mathbb{Z}^n$ such that $Q_{z,\varepsilon}\cap \Omega\neq \emptyset$, $Q_{z,\varepsilon}:= z+\varepsilon Q$ and consider the piece-wise constant function $\bar v_\varepsilon$ given by
$$
\bar v_\varepsilon\big|_{Q_{z,\varepsilon}} :=\frac{1}{\varepsilon^n}\int_{Q_{z,\varepsilon}} v(y)\,dy \quad \text{ in }Q_{z,\varepsilon}.
$$
We denote $\displaystyle\Omega_0=\bigcup_{z\in I^\varepsilon} Q_{z,\varepsilon}$, then
$$
\int_\Omega (\rho_\varepsilon-\bar \rho) v\,dx= \int_{\Omega_0} (\rho_\varepsilon-\bar \rho) (v-\bar v_\varepsilon)\,dx  +  \int_{\Omega_0} (\rho_\varepsilon-\bar \rho)   \bar v_\varepsilon  \,dx:=(a)+(b). 
$$
Since $\rho$ is $Q-$periodic, we get
\begin{equation} \label{cotav0}
(b)=\sum_{z\in I^\varepsilon } \bar v_\varepsilon \big|_{Q_{z,\varepsilon}} \int_{Q_{z,\varepsilon}} (\rho_\varepsilon -\bar \rho)\,dx=0.
\end{equation}

Let us estimate $(a)$.  Let us first consider the case in which $p\leq n$ and therefore $v\in W^{1,r}(\Omega)$ with $r= \frac{n}{n+1-p}$. We observe that
$$
q'\leq r^* \iff n\leq pq,
$$
then by using Proposition \ref{prop1} we obtain that
\begin{align*}
\begin{split}
(a) &\leq \|\rho_\varepsilon - \bar \rho\|_{L^q(\Omega)} \| v- \bar v_\varepsilon\|_{L^{q'}(\Omega_0)} = \|\rho_\varepsilon - \bar \rho\|_{L^q(\Omega)}\sum_{z\in I^\varepsilon } \|v-\bar v_\varepsilon\|_{ L^{q'}(Q_{z,\varepsilon})} \\
&\leq   \|\rho_\varepsilon - \bar \rho\|_{L^q(\Omega)} \sum_{z\in I^\varepsilon } c\varepsilon^{1+\frac{n}{q'}-\frac{n}{r}} \|\nabla v\|_{L^r(Q_{z,\varepsilon})}\leq c\varepsilon^{p-\frac{n}{q} }   \|\rho_\varepsilon - \bar \rho\|_{L^q(\Omega)}  \|\nabla v\|_{L^r(\Omega)}.
\end{split}
\end{align*}
In light of \eqref{cota1}, the last expression gives 
\begin{equation} \label{cotav1}
(a) \leq c\varepsilon^{p-\frac{n}{q} }   \|\rho_\varepsilon - \bar \rho\|_{L^q(\Omega)}  \|\nabla u\|_{L^p(\Omega)}^p.
\end{equation}

When $n>p$, we have that $v\in W^{1,p}(\Omega)$. In this case, by using Proposition \ref{prop1} 
\begin{align*}
\begin{split}
(a) &\leq \|\rho_\varepsilon - \bar \rho\|_{L^1(\Omega)} \| v- \bar v_\varepsilon\|_{L^{\infty}(\Omega_0)} \\
&\leq \|\rho_\varepsilon - \bar \rho\|_{L^1(\Omega)}\sum_{z\in I^\varepsilon } \|v-\bar v_\varepsilon\|_{ L^\infty(Q_{z,\varepsilon})} \\
&\leq   \|\rho_\varepsilon - \bar \rho\|_{L^1(\Omega)} \sum_{z\in I^\varepsilon } c\varepsilon^{1-\frac{n}{p}} \|\nabla v\|_{L^p(Q_{z,\varepsilon})}\\
&\leq C\varepsilon^{1-\frac{n}{p} }   \|\rho_\varepsilon - \bar \rho\|_{L^1(\Omega)}  \|\nabla v\|_{L^p(\Omega)}.
\end{split}
\end{align*}
Then, due to \eqref{cota2}, the last expression gives
\begin{equation} \label{cotav2}
(a)\leq C\varepsilon^{1-\frac{n}{p} }   \|\rho_\varepsilon - \bar \rho\|_{L^1(\Omega)}  \|\nabla u\|_{L^p(\Omega)}^p.
\end{equation}
Finally, from  expression \eqref{cotav0}, \eqref{cotav1} and \eqref{cotav2} the proposition follows.
\end{proof}

The following estimate is essential to deal with the Neumann case.

For $\delta>0$ we denote by $G_\delta$ a tubular neighborhood of $\partial\Omega$, defined as $G_\delta=\{x\in\Omega\colon \text{\normalfont dist} (x,\partial\Omega)<\delta\}$.

\begin{prop} \label{prop2}
Let $p\geq 1$ and $r$ be such that $1\leq r \leq p_*$ with strict second inequality if $p\geq n$, that is, $1\leq r \leq \frac{(n-1)p}{n-p}$ when $p<n$ and $1\leq r <\infty$ when $p\geq n$.

Then there exists $\delta_0>0$ such that for every $\delta\in (0,\delta_0)$ and every $u\in W^{1,p}(\Omega)$ we have
$$
\|u\|_{L^r(G_\delta)} \leq C \delta^\frac1r \|u\|_{W^{1,p}(\Omega)}
$$
where $C$ is independent of $\delta$, $u$ and $r$.
\end{prop}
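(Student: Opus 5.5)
Our plan is to use the Lipschitz boundary to foliate the collar $G_\delta$ by translates (or level sets) of $\partial\Omega$ and apply the trace inequality on each leaf. Since $\partial\Omega$ is Lipschitz and compact, we fix a finite atlas of boxes $U_1,\dots,U_m$ covering $\partial\Omega$. In each box, after a rigid motion, $\Omega\cap U_j=\{(x',x_n)\colon x_n>\phi_j(x')\}\cap U_j$ with $\phi_j$ Lipschitz of constant $L$. In such a box the points of $G_\delta$ lie in $\{\phi_j(x')<x_n<\phi_j(x')+c\delta\}$ with $c=\sqrt{1+L^2}$, because the distance to the graph is comparable to the vertical distance. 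We choose $\delta_0$ so small that $G_{\delta_0}\subset\bigcup_j U_j$, up to a slightly shrunk cover, and so that the vertical segments of length $c\delta_0$ stay inside $U_j$.

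In each chart we write, by Fubini,
$$\int_{G_\delta\cap U_j}|u|^r\,dx\le\int_0^{c\delta}\int_{\{x'\}}|u(x',\phi_j(x')+t)|^r\,dx'\,dt.$$
For fixed $t\in(0,c\delta_0)$, the set $\Omega_t^j=\{x_n>\phi_j(x')+t\}\cap U_j$ is a translate of $\Omega\cap U_j$, so it is a Lipschitz domain with the same constant $L$. The trace theorem (valid for $1\le r\le p_*$, with $r<\infty$ when $p\ge n$) then gives
$$\int|u(x',\phi_j(x')+t)|^r\,dx'\le C\|u\|_{W^{1,p}(\Omega)}^r,$$
with $C$ independent of $t$. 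This uniformity in $t$ is the key point, and the fact that the translated domains all have the same Lipschitz character delivers it. To handle the chart cutoff cleanly we apply the trace inequality to $\psi_j u$, where $\{\psi_j\}$ is a partition of unity subordinate to the cover. This yields $\|\psi_j u\|_{W^{1,p}}\le C\|u\|_{W^{1,p}(\Omega)}$, and the trace on the shifted graph is controlled by the $W^{1,p}$ norm on a fixed region $\{x_n>\phi_j+t\}$ independent of $t$.

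Integrating in $t$ over $(0,c\delta)$ gives $\int_{G_\delta\cap U_j}|u|^r\le Cc\,\delta\,\|u\|^r_{W^{1,p}}$. Summing over the finitely many $j$ (using $|u|^r\le m^{r-1}\sum_j|\psi_ju|^r$, or simply the covering) and taking $r$-th roots gives $\|u\|_{L^r(G_\delta)}\le C\delta^{1/r}\|u\|_{W^{1,p}(\Omega)}$.

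The main obstacle is the claim that $C$ is independent of $r$. The trace constant a priori depends on $r$, and it blows up as $r\to\infty$ when $p\ge n$. Our approach is to interpret the statement as uniform over $r$ in a compact range $[1,r_0]$ with $r_0\le p_*$ ($r_0<\infty$ if $p\ge n$). There we use $\|w\|_{L^r(\Sigma)}\le |\Sigma|^{1/r-1/r_0}\|w\|_{L^{r_0}(\Sigma)}$ on the leaves, which have uniformly bounded measure. Combined with the single trace estimate at $r_0$, this gives a constant uniform in $r\le r_0$, which is all that is needed later.
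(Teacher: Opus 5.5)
Your argument is essentially the paper's: foliate the collar by surfaces parallel to $\partial\Omega$, apply the trace theorem with a constant independent of the leaf, and integrate over the transversal parameter. The paper uses an abstract nested family of smooth surfaces $S_\tau=\partial\Omega_\tau$ where you use vertical translates of the Lipschitz graphs in charts, and your version makes the uniformity of the trace constant and the Fubini step more transparent.

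The paper never justifies the claimed independence of $C$ from $r$, so your restriction to a compact range of $r$ is legitimate. Indeed, for $p=n\ge 2$, uniformity over all $r<\infty$ on a fixed collar would force $\|u\|_{L^\infty(G_\delta)}\le C\|u\|_{W^{1,n}(\Omega)}$, which is false. For $p>n$, however, the trace constant does not blow up: by Morrey's inequality you may take $r_0=\infty$, and then the estimate is uniform over all $r\ge 1$.
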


\begin{proof}
Take $1<p\leq n$ and  $G_\delta=\{x\in \Omega\colon \text{\normalfont dist}(x,\partial\Omega) <\delta\}$.  Since $\Omega$ domain with Lipschitz boundary,  there is a sufficiently small $\delta_0>0$ and a family of smooth surfaces $\{S_\tau\}_{\tau\in[0,\delta_0]}$ with the following properties:
\begin{itemize}
\item[i)] each $S_\tau=\partial \Omega_\tau$ for some  domain $\Omega_\tau\subset \Omega$,
\item[ii)] the domains are nested: $\Omega_{\tau'}\subset \Omega_\tau$ if $\tau'>\tau$ and $\Omega_0=\Omega$,
\item[iii)] for all $x\in S_\tau$ and $\tau\in [0,\delta_0]$ it holds that, for some positive constants $c_1,c_2$, 
$$c_1\tau \leq {\normalfont dist}(x,\partial\Omega) \leq c_2\tau.$$
\end{itemize}
Furthermore, $G_\tau \subset \Omega\setminus \Omega_\tau$.  When $r<\frac{(n-1)p}{n-p}$ (if $p<n$) or $r<\infty$ (if $p=n$), by the Sobolev trace Theorem we have
$$
\|u\|_{L^r(S_\tau)} \leq C \|u\|_{W^{1,p}(\Omega_\tau)} \leq C \|u\|_{W^{1,p}(\Omega)} \qquad \text{for }\tau\in[0,\delta_0],
$$
where the constant $C$  is independent of $\tau$. Integrating this inequality with respect to $\tau$ yields
$$
\|u\|_{L^r(G_\delta)}^r =\int_0^\delta \left(\int_{S_\tau} |u|^r\,dS \right)d\tau \leq \delta C^r \|u\|_{W^{1,p}(\Omega)}^r.
$$
This concludes the proof.
\end{proof}

The following proposition states the behavior of rapidly oscillating integrals  related to the Neumann eigenvalue problem.
\begin{prop} \label{prop.neumann}
Let  $1<p\leq n$ and $\frac{n-1}{p-1}< q \leq \infty$. Then
$$
\left|\int_\Omega(\rho_\varepsilon-\bar\rho) |u|^p\,dx \right|  \leq   C \varepsilon^{1-\frac1q} \|\rho_\varepsilon - \bar \rho\|_{L^q(\Omega)}  \|   u\|_{W^{1,p}(\Omega)}^p
$$
holds for all $u\in W^{1,p}(\Omega)$ and $\varepsilon\ll 1$,  where $C$ is a constant depending only of $p$  and $\Omega$.
\end{prop}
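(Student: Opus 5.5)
We follow the strategy of Proposition \ref{prop.dirichlet}, with $v:=|u|^p$ and the piecewise constant averages $\bar v_\varepsilon$ on the cubes $Q_{z,\varepsilon}$, $z\in\varepsilon\mathbb Z^n$. Since $u$ no longer vanishes on $\partial\Omega$, we cannot extend by zero. We therefore split the cubes into two families. The \emph{interior cubes} are those with $Q_{z,\varepsilon}\subset\Omega$; call their union $\Omega_\varepsilon$. The remaining cubes meet $\partial\Omega$. Since each cube has diameter $\sqrt n\,\varepsilon$, the set $\Omega\setminus\Omega_\varepsilon$ is contained in the tubular neighbourhood $G_{\delta}$ with $\delta=\sqrt n\,\varepsilon$. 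We then write
$$
\int_\Omega(\rho_\varepsilon-\bar\rho)v\,dx=\int_{\Omega_\varepsilon}(\rho_\varepsilon-\bar\rho)(v-\bar v_\varepsilon)\,dx+\int_{\Omega_\varepsilon}(\rho_\varepsilon-\bar\rho)\bar v_\varepsilon\,dx+\int_{\Omega\setminus\Omega_\varepsilon}(\rho_\varepsilon-\bar\rho)v\,dx .
$$
By periodicity of $\rho$, exactly as in \eqref{cotav0}, the middle term vanishes, because each interior cube is a full period cell.

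\textbf{Interior term.} Here $v\in W^{1,r}(\Omega)$ with $r=\frac{n}{n+1-p}$. The computation \eqref{cota1} goes through with $\|u\|_{W^{1,p}(\Omega)}$ in place of $\|\nabla u\|_{L^p}$, now using the Sobolev embedding $W^{1,p}(\Omega)\hookrightarrow L^{p^*}(\Omega)$ valid for Lipschitz $\Omega$. This gives $\|\nabla v\|_{L^r(\Omega)}\le C\|u\|^p_{W^{1,p}(\Omega)}$. Applying H\"older and Proposition \ref{prop1} cube by cube as before yields a bound $C\varepsilon^{p-\frac nq}\|\rho_\varepsilon-\bar\rho\|_{L^q}\|u\|^p_{W^{1,p}}$. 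This requires $q'\le r^*$, i.e. $q\ge n/p$, which holds because $\frac{n-1}{p-1}\ge\frac np$ when $p\le n$. Since $p-\frac nq\ge 1-\frac1q$ (equivalent to $(p-1)q\ge n-1$), this is at least as good as the claimed rate for $\varepsilon<1$.

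\textbf{Boundary layer term.} This is the main step and fixes the exponent. By H\"older,
$$
\Big|\int_{G_\delta}(\rho_\varepsilon-\bar\rho)|u|^p\,dx\Big|\le\|\rho_\varepsilon-\bar\rho\|_{L^q(\Omega)}\,\|u\|^p_{L^{pq'}(G_\delta)} .
$$
We then apply Proposition \ref{prop2} with $r=pq'$, which gives $\|u\|^p_{L^{pq'}(G_\delta)}\le C\delta^{\frac{1}{q'}}\|u\|^p_{W^{1,p}(\Omega)}=C\varepsilon^{1-\frac1q}\|u\|^p_{W^{1,p}}$. Proposition \ref{prop2} requires $pq'\le p_*=\frac{(n-1)p}{n-p}$, i.e. $q'\le\frac{n-1}{n-p}$, which is equivalent to $q\ge\frac{n-1}{p-1}$. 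The strict inequality in the hypothesis lets us stay in the range where the trace constant is uniform, and it covers the case $p=n$, where any finite $r$ is allowed. The case $q=\infty$ is handled directly, with $L^\infty$ and $r=p$.

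The delicate point is the boundary layer. One must check that the union of the non-interior cubes intersected with $\Omega$ really lies in $G_{c\varepsilon}$, and that $\varepsilon\ll1$ so that $c\varepsilon<\delta_0$. One must also check that the trace-based Proposition \ref{prop2} applies with the exact exponent $pq'$. The loss from $\varepsilon^{p-n/q}$ down to $\varepsilon^{1-1/q}$ comes entirely from this layer. Adding the two estimates gives the claim.
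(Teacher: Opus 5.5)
This is essentially the paper's own proof. You use the same split into cells $Q_{z,\varepsilon}\subset\Omega$ and a layer inside $G_{c\varepsilon}$, the same periodic cancellation of the cell averages, the Dirichlet computation for the interior, and H\"older plus Proposition~\ref{prop2} with $r=pq'$ for the layer. You also rightly state the interior bound with $\|u\|^p_{W^{1,p}(\Omega)}$, since without zero trace Sobolev's inequality in terms of $\|\nabla u\|_{L^p}$ alone is unavailable.

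\textbf{The interior step has a genuine gap.} You share it with the proof of Proposition~\ref{prop.dirichlet} that you invoke. The bound $C\varepsilon^{p-\frac nq}\|\rho_\varepsilon-\bar\rho\|_{L^q(\Omega)}\|u\|^p_{W^{1,p}(\Omega)}$ fails in general once $p-\frac nq>1$, i.e.\ $q>\frac n{p-1}=r'$.

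\emph{Why the argument breaks.} Proposition~\ref{prop1} gives $\|v-\bar v_\varepsilon\|_{L^{q'}(Q_{z,\varepsilon})}\le c\varepsilon^{1+\frac n{q'}-\frac nr}\|\nabla v\|_{L^r(Q_{z,\varepsilon})}$ cell by cell. However, $\sum_z\|\nabla v\|_{L^r(Q_{z,\varepsilon})}$ is not bounded by $\|\nabla v\|_{L^r(\Omega)}$. The cellwise bounds have to be combined in $\ell^{q'}$, and $\bigl(\sum_z a_z^{q'}\bigr)^{1/q'}\le\bigl(\sum_z a_z^{r}\bigr)^{1/r}$ holds only when $q'\ge r$. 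Otherwise you lose a factor $(\#I_\varepsilon)^{\frac1{q'}-\frac1r}\sim\varepsilon^{-\frac n{q'}+\frac nr}$ and are left with $\varepsilon^1$.

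\emph{Counterexample.} Take $n=3$, $p=2$, $q=\infty$, $\rho(y)=1+\frac12\sin(2\pi y_1)$, and $u=\phi\,(1+\varepsilon\sin(2\pi x_1/\varepsilon))$ with $\phi\in C_c^\infty(\Omega)$. Then $\|u\|_{W^{1,2}(\Omega)}$ stays bounded and the integral is entirely interior. It equals $\frac\varepsilon2\int_\Omega\phi^2\,dx+o(\varepsilon)$, not $O(\varepsilon^2)$. So your justification ``$p-\frac nq\ge 1-\frac1q$'' is not the operative one for large $q$.

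\emph{How to repair it.} The statement itself survives. For $q>\frac n{p-1}$, first lower the exponent to $\tilde q=\frac n{p-1}$ by H\"older on $\Omega$. Then $\tilde q'=r$, the $\ell^r$ summation is exact, and the interior term is at most $C\varepsilon\|\rho_\varepsilon-\bar\rho\|_{L^q(\Omega)}\|u\|^p_{W^{1,p}(\Omega)}$. Hence the interior term is $O(\varepsilon^{\min\{1,p-\frac nq\}})$. Both $1$ and, for $q\ge\frac{n-1}{p-1}$, $p-\frac nq$ are $\ge 1-\frac1q$, so the layer estimate still dictates the rate.

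\emph{A related slip at $p=n$.} There $r=\frac n{n+1-p}=n$, and $|u|^n\in W^{1,n}$ would require $u\in L^\infty$, which $W^{1,n}$ does not provide. Take instead $r<n$ close enough to $n$ that $q'\le r^*$; this is possible since $q>1$ forces $q'<\infty$. The same two cases then give an exponent $\ge 1-\frac1q$.
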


\begin{proof}
Let $u\in W^{1,p}(\Omega)$. We denote by $I_\varepsilon$ the set of all $z\in \mathbb{Z}^n$ such that $Q_{z,\varepsilon}:=\varepsilon (z+Q)\subset \Omega$ and consider the piece-wise constant function $\bar u_\varepsilon$ given by
$$
\bar u_\varepsilon\big|_{Q_{z,\varepsilon}} :=\frac{1}{\varepsilon^n}\int_{Q_{z,\varepsilon}} u(y)\,dy \quad \text{ in }Q_{z,\varepsilon}.
$$

We denote $\Omega_0=\bigcup_{z\in I^\varepsilon} Q_{z,\varepsilon}$ the union of all the cubes $Q_{z,\varepsilon}$ fully contained in $\Omega$ and $G=\Omega\setminus \Omega_0$. Then, we can split the integral as
\begin{align*}
\int_\Omega (\rho_\varepsilon-\bar \rho) |u|^p \,dx &= \int_{\Omega_0} (\rho_\varepsilon-\bar \rho) |u|^p  \,dx
 + \int_G (\rho_\varepsilon-\bar \rho) |u|^p \,dx:=(a)+(b).
\end{align*}

The term $(a)$ was already estimated in \eqref{cotav1} and \eqref{cotav2} in the Dirichlet case, but these estimates hold in fact for functions in $W^{1,p}(\Omega)$ (not only for functions in $W^{1,p}_0(\Omega)$). Then, it is obtained that
\begin{align*}
(a)\leq \left|\int_\Omega(\rho_\varepsilon-\bar\rho) |u|^p \,dx \right|  \leq   C \varepsilon^\alpha \|\rho_\varepsilon - \bar \rho\|_{L^q(\Omega)}   \|\nabla u\|_{L^{p}(\Omega)}^p, 
\end{align*}
where $\alpha$ is given in \eqref{alfa}.

We deal now with $(b)$. Observe that $G$ is contained in a $\delta-$neighborhood of $\partial\Omega$ with $\delta=c\varepsilon$ for some constant $c$. We apply H\"older's inequality with exponents $q$ and $q'$ and then Proposition  \ref{prop2} with $r=pq'$. 

In the case $p<n$ we have that $1\leq r \leq \frac{(n-1)p}{n-p}$ if and only if $\frac{n-1}{p-1}\leq q\leq \infty$.

In the case $p=n$, $1\leq r < \infty$ if and only if $1<q\leq \infty$.

Under these assumptions, we get
\begin{align*}
(b)&\leq \|\rho_\varepsilon -\bar \rho\|_{L^q(G)} \|u\|_{L^{pq'}(G)}^p\\
&\leq C \|\rho_\varepsilon -\bar \rho\|_{L^q(G)} \delta^\frac{1}{q'} \|u\|_{W^{1,p}(\Omega)}^p\leq C \varepsilon^{1-\frac{1}{q}}\|\rho_\varepsilon -\bar \rho\|_{L^q(\Omega)} \|u\|_{W^{1,p}(\Omega)}^p.
\end{align*}

Gathering the previous expressions, when $\frac{n-1}{p-1}< q \leq \infty$ and $p\leq n$ we get
\begin{align*}
\left|\int_\Omega (\rho_\varepsilon-\bar \rho) |u|^p\,dx \right| &\leq C  \|\rho_\varepsilon - \bar \rho\|_{L^q(\Omega)} \left( \varepsilon^{p-\frac{n}{q}} \|\nabla u\|_{L^p(\Omega)}^p +  \varepsilon^{1-\frac{1}{q}}  \|u\|_{W^{1,p}(\Omega)}^p \right)\\
&\leq 
C  \|\rho_\varepsilon - \bar \rho\|_{L^q(\Omega)} \|u\|_{W^{1,p}(\Omega)}^p  \max\{\varepsilon^{p-\frac{n}{q}}, \varepsilon^{1-\frac1q} \}.
\end{align*}

Finally, observe that $1-\frac1q \leq p-\frac{n}{q} \iff \frac{n-1}{p-1}\leq q$.  This concludes the proof.
\end{proof}

\section{Main results} \label{sec3}
When $\{ \rho_\varepsilon\}_{\varepsilon>0}$ and $\{V_\varepsilon\}_{\varepsilon>0}$ are general sequences in $L^q(\Omega)$ (not necessarily periodic), with $p$ and $q$ satisfying  \eqref{rel1}--\eqref{rel2}, our first result states the convergences of eigenvalues in the Dirichlet case as $\varepsilon\to 0$.

\begin{thm} \label{teo.d.sinorden}
Let $\lambda_{k,\varepsilon}$ and $\lambda_{k,0}$ be the $k-$th eigenvalue of problems \eqref{eq.p.eps} and \eqref{eq.p.0}, respectively. Then, 
$$
\lim_{\varepsilon\to 0} \lambda_{k,\varepsilon} = \lambda_{k,0}.
$$
\end{thm}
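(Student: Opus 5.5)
The plan is to obtain convergence of the minimax values from a \emph{uniform} convergence of the Rayleigh quotients on compact symmetric sets. The key observation is that, under \eqref{rel1}--\eqref{rel2}, the maps
$$
v\mapsto \int_\Omega \rho_\varepsilon |v|^p\,dx,\qquad v\mapsto \int_\Omega V_\varepsilon |v|^p\,dx
$$
converge to the corresponding maps with $\rho_0,V_0$ uniformly on compact subsets of $W^{1,p}_0(\Omega)$.

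For a fixed $v\in W^{1,p}_0(\Omega)$ we have $|v|^p\in L^{q'}(\Omega)$. When $p\le n$, the condition $q>n/p$ gives $pq'<p^*$, so Sobolev applies; when $p>n$, Morrey gives $|v|^p\in L^\infty$. Weak (resp. weak*) convergence then yields pointwise convergence of these functionals.

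To upgrade this to uniform convergence on a compact set $K$, I will use an equicontinuity bound. Since $\rho_\varepsilon$ is bounded in $L^q$,
$$
\Bigl|\int\rho_\varepsilon(|v|^p-|w|^p)\Bigr|\le \sup_\varepsilon\|\rho_\varepsilon\|_{L^q}\,\||v|^p-|w|^p\|_{L^{q'}}.
$$
The right-hand side is controlled by $\|v-w\|_{L^{pq'}}\bigl(\|v\|_{L^{pq'}}+\|w\|_{L^{pq'}}\bigr)^{p-1}$, and then by $W^{1,p}$ norms via the Sobolev (or Morrey) embedding. Pointwise convergence together with this equicontinuity gives uniform convergence on $K$ by a finite-net argument. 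Since the denominators satisfy $\int\rho_\varepsilon|v|^p\ge c_0\|v\|_{L^p}^p$ and $K\not\ni0$, they are bounded away from zero on $K$ uniformly in $\varepsilon$. Hence the Rayleigh quotients $R_\varepsilon\to R_0$ uniformly on $K$.

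For the upper bound, I fix $\eta>0$ and take $C\in\Gamma_k$ with $\sup_C R_0\le\lambda_{k,0}+\eta$. The class $\Gamma_k$ does not depend on $\varepsilon$, since it is defined in $W^{1,p}_0(\Omega)$, so $C$ is admissible for every $\varepsilon$. Uniform convergence then gives $\limsup_\varepsilon\lambda_{k,\varepsilon}\le\lambda_{k,0}+\eta$.

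The lower bound is the delicate step, because the near-optimal sets $C_\varepsilon$ vary with $\varepsilon$ and need not stay in a fixed compact set. My plan is to use a uniform comparison rather than compactness. I will show that for every $\eta>0$ there is $\varepsilon_0$ such that
$$
\Bigl|\int(\rho_\varepsilon-\rho_0)|v|^p\Bigr|+\Bigl|\int(V_\varepsilon-V_0)|v|^p\Bigr|\le \eta\,\|\nabla v\|_{L^p}^p \quad\text{for all } v \text{ and } \varepsilon<\varepsilon_0,
$$
that is, uniform convergence on the unit ball of $W^{1,p}_0$.

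I will argue this by contradiction. Suppose there are $v_j$ with $\|\nabla v_j\|_{L^p}=1$ and $\varepsilon_j\to0$ violating the bound. By Rellich and the subcritical integrability $pq'<p^*$, we have $v_j\to v$ strongly in $L^{pq'}$, so $|v_j|^p\to|v|^p$ strongly in $L^{q'}$. Weak convergence of $\rho_{\varepsilon_j}$ then forces the integral to tend to zero, a contradiction. For $p>n$ the same works using compactness into $C(\overline\Omega)$.

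With this uniform bound in hand, the comparison is direct. Writing $N(v)=\|\nabla v\|_{L^p}^p+\int V_0|v|^p$ for the numerator and using $\int\rho_0|v|^p\ge c_0\|v\|_{L^p}^p$, we have $\|\nabla v\|^p\le N(v)$ and $\|\nabla v\|^p\le \lambda\int\rho_0|v|^p$ on the relevant sets. This gives $R_\varepsilon(v)\ge (1-\eta')R_0(v)-\eta''$ with constants tending to zero, uniformly for $v$ with $R_0(v)$ bounded. Taking infima over $\Gamma_k$, together with the already established bound $\limsup_\varepsilon\lambda_{k,\varepsilon}\le\lambda_{k,0}+\eta$, yields $\liminf_\varepsilon\lambda_{k,\varepsilon}\ge\lambda_{k,0}$ and completes the proof.

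I expect the main obstacle to be exactly this compactness-based uniform estimate, and in particular the borderline exponents: the case $q=\infty$, where weak* convergence against $L^1$ functions is needed, and the case $p>n$ with $q=1$, where $L^1$ weak convergence must be tested against continuous functions.
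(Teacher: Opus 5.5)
Your proposal is correct, and it follows a different and more careful route than the paper's argument.

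\textbf{How the paper argues.} It fixes a near-optimal compact set $G^k_\delta$ for $\lambda_{k,0}$. It then uses only pointwise convergence $\int_\Omega(\rho_\varepsilon-\rho_0)|u|^p\,dx\to0$ for each fixed $u$, via $|u|^p\in L^{q'}$, and moves the resulting $o(1)$ through $\sup_{G^k_\delta}$ without comment. The lower bound is dismissed with ``interchanging the roles of $\lambda_{k,\varepsilon}$ and $\lambda_{k,0}$''. As you observe, pointwise convergence cannot justify that interchange, because the near-optimal sets for $\lambda_{k,\varepsilon}$ move with $\varepsilon$.

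\textbf{What your route adds.} Your estimate
$$
\Bigl|\int_\Omega(\rho_\varepsilon-\rho_0)|v|^p\,dx\Bigr|+\Bigl|\int_\Omega(V_\varepsilon-V_0)|v|^p\,dx\Bigr|\le\eta\,\|\nabla v\|_{L^p(\Omega)}^p\quad\text{for all } v\in W^{1,p}_0(\Omega),\ \varepsilon<\varepsilon_0(\eta)
$$
comes from the compact embedding $W^{1,p}_0(\Omega)\hookrightarrow L^{pq'}(\Omega)$ (resp.\ $C(\overline\Omega)$) combined with weak convergence. This is exactly the missing ingredient; it is a qualitative analogue of Proposition~\ref{prop.dirichlet}. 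Since it is symmetric in $\varepsilon$ and $0$, it gives both inequalities at once, so your finite-net argument for the upper bound is not needed.

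The borderline exponents you worry about are not a real obstacle. The contradiction argument only needs $\sup_j\|\rho_{\varepsilon_j}-\rho_0\|_{L^q}<\infty$, which is part of the standing hypotheses and also follows from uniform boundedness along a sequence.

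\textbf{One step to tidy.} For the lower bound you must control $R_0$ on near-optimal sets for $\lambda_{k,\varepsilon}$. On those sets only $R_\varepsilon$ is known a priori to be bounded, so your inequality ``for $v$ with $R_0(v)$ bounded'' is slightly circular as written. There are two easy fixes.

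First fix: on those sets use
$$
\|\nabla v\|_{L^p}^p\le N_\varepsilon(v)\le\Lambda\int_\Omega\rho_\varepsilon|v|^p\,dx,\qquad N_\varepsilon(v):=\|\nabla v\|_{L^p}^p+\int_\Omega V_\varepsilon|v|^p\,dx,
$$
which yields $R_0(v)\le\frac{1+\eta}{1-\eta\Lambda}R_\varepsilon(v)$.

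Second fix: note that your estimate already holds globally in the form
$$
R_\varepsilon(v)\ge\frac{(1-\eta)R_0(v)}{1+\eta R_0(v)}\qquad\text{for all } v\neq0 .
$$
The right-hand side is increasing in $R_0(v)$. Taking $\sup$ over $C$ and then $\inf$ over $\Gamma_k$ gives $\lambda_{k,\varepsilon}\ge(1-\eta)\lambda_{k,0}/(1+\eta\lambda_{k,0})$ directly. The same inequality with the roles of $\varepsilon$ and $0$ swapped gives the upper bound.
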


\begin{proof}
Let $\delta>0$ and let $G_\delta^k\subset W^{1,p}_0(\Omega)$ be a compact, symmetric set of genus $k$ such that 
$$
\lambda_{k,0} = \inf_{G\in \Gamma_k} \sup_{u\in G} \frac{\int_\Omega |\nabla u|^p + V_0 |u|^p \,dx}{\int_\Omega   \rho_0 |u|^p\,dx} = \sup_{u\in G_\delta^k} \frac{\int_\Omega |\nabla u|^p + V_0|u|^p\,dx}{\int_\Omega   \rho_0 |u|^p\,dx} + O(\delta).
$$
The set $G_\delta^k$ is admissible in the variational characterization of $\lambda_{k,\varepsilon}$, then
\begin{align*}
\lambda_{k,\varepsilon} &\leq \sup_{u\in G_\delta^k} \frac{\int_\Omega |\nabla u|^p  +   V_\varepsilon |u|^p\,dx}{\int_\Omega  \rho_\varepsilon |u|^p\,dx}\\ 
&=  \sup_{u\in G_\delta^k} \left( \frac{\int_\Omega |\nabla u|^p +  V_0 |u|^p \,dx}{\int_\Omega   \rho_0 |u|^p\,dx}  +  \frac{\int_\Omega (V_\varepsilon -  V_0) |u|^p \,dx}{\int_\Omega  \rho_0 |u|^p\,dx} \right)
\frac{\int_\Omega   \rho_0 |u|^p\,dx}{\int_\Omega  \rho_\varepsilon |u|^p\,dx} . 
\end{align*}

When $1\leq q<\infty$, by assumption  we have that $\rho_\varepsilon \rightharpoonup \rho_0$ and $V_\varepsilon \rightharpoonup V_0$ weakly in $L^q(\Omega)$, this means that
\begin{equation} \label{conv.debil}
\lim_{\varepsilon\to 0}\int_\Omega \rho_\varepsilon \varphi \,dx = \int_\Omega \rho_0 \varphi\,dx, \quad \lim_{\varepsilon\to 0}\int_\Omega V_\varepsilon \varphi \,dx = \int_\Omega V_0 \varphi\,dx \qquad \forall \varphi \in L^{q'}(\Omega).
\end{equation}
Since $pq'\leq p^*$  for any $p>1$ due to \eqref{rel1}--\eqref{rel2}, we have by Sobolev's inequality that $|u|^p\in L^{q'}(\Omega)$, and then \eqref{conv.debil} gives that
$$
 \frac{\int_\Omega   \rho_0 |u|^p\,dx}{\int_\Omega  \rho_\varepsilon |u|^p\,dx} =1+ o(1), \quad \int_\Omega (V_\varepsilon -   V_0)|u|^p\,dx = o(1) \qquad \text{ as } \varepsilon \to 0.
$$
Therefore,  for every $u\in G_\delta^k$ we have that
$$
\lim_{\varepsilon\to0}\lambda_{k,\varepsilon}\leq \lim_{\varepsilon\to0}\sup_{u\in G_\delta^k} \left( \frac{\int_\Omega |\nabla u|^p +   V_0 |u|^p \,dx + o(1)  }{\int_\Omega    \rho_0 |u|^p\,dx } \right)(1+o(1)).
$$
Therefore, letting $\delta\to 0$, we obtain that $\lim_{\varepsilon\to 0} \lambda_{k,\varepsilon} \leq \lambda_{k,0}$. The case $q=\infty$ is similar.
 
By interchanging the roles of \(\lambda_{k,\varepsilon}\) and \(\lambda_{k,0}\), a similar argument yields \(\lim_{\varepsilon \to 0} \lambda_{k,\varepsilon} \geq \lambda_{k,0}\), thereby completing the proof.
\end{proof}

Assume now that  $\rho,V\in L^q(Q)$ are $Q-$periodic functions, where $q$ satisfies condition \eqref{rel1}--\eqref{rel2}. Then, we denote the sequence of rapidly oscillating functions $\{\rho_\varepsilon\}_{\varepsilon<1}$ and $\{V_\varepsilon\}_{\varepsilon<1}$ by $\rho_\varepsilon(x):=\rho(\tfrac{x}{\varepsilon})$ and $V_\varepsilon(x):=V(\tfrac{x}{\varepsilon})$ and by $\bar \rho$ and $\bar V$ their limit averages.

The following lemma gives a bound of the difference between a rapidly oscillating function and its limit.

\begin{lema} \label{cota.rho}
Let $1\leq q\leq \infty$. Given a bounded open set $\Omega\subset \mathbb R^n$,  we have that
$$
\|\rho_\varepsilon - \bar \rho\|_{L^q(\Omega)} \leq 
\|\rho\|_{L^q(Q)}\left( \mathop{diam}(\Omega)^\frac{n}{q} + |\Omega|^\frac1q \right).
$$
\end{lema}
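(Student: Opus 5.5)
The plan is to split with Minkowski's inequality,
$$
\|\rho_\varepsilon-\bar\rho\|_{L^q(\Omega)}\le \|\rho_\varepsilon\|_{L^q(\Omega)}+\|\bar\rho\|_{L^q(\Omega)},
$$
and to show that the first term is bounded by $\|\rho\|_{L^q(Q)}\mathop{diam}(\Omega)^{\frac nq}$ and the second by $\|\rho\|_{L^q(Q)}|\Omega|^{\frac1q}$. For $q=\infty$ both claims are immediate. Indeed, $\|\rho_\varepsilon\|_{L^\infty}=\|\rho\|_{L^\infty(Q)}$, and $|\bar\rho|\le\|\rho\|_{L^\infty(Q)}$. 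Both exponents are $0$, so the right-hand side equals $2\|\rho\|_{L^\infty(Q)}$. From now on take $1\le q<\infty$.

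For the constant term I would use Jensen's (or H\"older's) inequality on the unit cube, $|Q|=1$:
$$
|\bar\rho|\le \int_Q|\rho|\,dx\le \|\rho\|_{L^q(Q)}.
$$
This gives $\|\bar\rho\|_{L^q(\Omega)}=|\bar\rho|\,|\Omega|^{\frac1q}\le \|\rho\|_{L^q(Q)}|\Omega|^{\frac1q}$.

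For the oscillating term I will change variables with $y=x/\varepsilon$:
$$
\int_\Omega|\rho(x/\varepsilon)|^q\,dx=\varepsilon^n\int_{\Omega/\varepsilon}|\rho(y)|^q\,dy.
$$
The set $\Omega/\varepsilon$ has diameter $D/\varepsilon$, where $D=\mathop{diam}(\Omega)$. It is therefore contained in a cube of side $D/\varepsilon$ (up to a dimensional factor, e.g.\ $D/\varepsilon$ suffices since each coordinate projection has length at most $D$). Such a cube is covered by at most $(D/\varepsilon+1)^n$ integer translates of $Q$. By periodicity each translate contributes $\|\rho\|_{L^q(Q)}^q$, so
$$
\|\rho_\varepsilon\|_{L^q(\Omega)}^q\le \varepsilon^n\left(\tfrac D\varepsilon+1\right)^n\|\rho\|_{L^q(Q)}^q=(D+\varepsilon)^n\|\rho\|_{L^q(Q)}^q .
$$

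The main obstacle is this covering count. The ``$+1$'' from partial cells means the clean factor $\mathop{diam}(\Omega)^{n}$ is only obtained asymptotically, i.e.\ for $\varepsilon\ll\mathop{diam}(\Omega)$. I expect the stated inequality to be meant in that regime, consistent with the standing convention $\varepsilon\ll1$ in the paper. For example, one may replace $(D+\varepsilon)^n$ by $(2D)^n$ when $\varepsilon\le D$ and absorb the factor $2^{n/q}$ into the constant. Alternatively one can check the count more carefully: the number of cells $\varepsilon(z+Q)$ meeting $\Omega$ is at most $(\lceil D/\varepsilon\rceil+1)^n$, so $\varepsilon^n$ times this count is at most $(D+2\varepsilon)^n$. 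If the exact constant is needed, I would state the result as holding for $\varepsilon$ small with $\mathop{diam}(\Omega)$ replaced by $\mathop{diam}(\Omega)+2\varepsilon$, which is harmless for all later applications. Taking $q$-th roots and combining the two bounds gives the lemma.
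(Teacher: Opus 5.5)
Your proof follows the paper's route: Minkowski's inequality, then $|\bar\rho|\le\|\rho\|_{L^q(Q)}$ on the unit cube, then a periodic cell count for $\|\rho_\varepsilon\|_{L^q(\Omega)}$.

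Your hesitation at the counting step is justified. The paper reaches the clean factor $\mathop{diam}(\Omega)^n$ only by bounding the number of $\varepsilon$-cells \emph{meeting} $Q_d$ by $[d/\varepsilon]^n$. That is the number of cells \emph{contained} in $Q_d$. In fact the inequality with this exact factor is false in general. Take $n=q=1$, $\Omega=(-\tfrac1{20},\tfrac1{20})$, $\varepsilon=\tfrac12$, and $\rho=10$ on $(-\tfrac1{10},\tfrac1{10})+\mathbb Z$, $\rho=1$ elsewhere. Then $\bar\rho=\|\rho\|_{L^1(Q)}=\tfrac{14}{5}$ and $\rho_\varepsilon\equiv 10$ on $\Omega$. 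The left-hand side is $0.72$, while the right-hand side is $0.56$. Concentrating $\rho$ more strongly on a periodic set of points gives similar counterexamples with $\varepsilon$ much smaller than $\mathop{diam}(\Omega)$.

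So your corrected statement, with $\mathop{diam}(\Omega)$ replaced by $\mathop{diam}(\Omega)+\varepsilon$ (or $+2\varepsilon$), is what can actually be proved. It is also all that Theorem \ref{teo.d} needs, since there only the independence of the bound from $\varepsilon<1$ and from $k$ is used.

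One small slip in your count: a cube of side $s=D/\varepsilon$ can meet $(\lceil s\rceil+1)^n$ lattice cells, which can exceed $(s+1)^n$. In one dimension, an interval of length $1.5$ can meet three unit cells. The bound $(D+\varepsilon)^n$ is nevertheless true. That cube lies inside a cube of integer side $\lceil s\rceil$. Over such a cube the integral of the $Q$-periodic function $|\rho|^q$ equals exactly $\lceil s\rceil^n\|\rho\|_{L^q(Q)}^q$.
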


\begin{proof}

Since $\Omega$ is bounded, we assume up to a translation  that $\Omega\subset Q_d$ for some $d>1$, where $Q_d$ is a $n-$cube of side $d=\mathop{diam}(\Omega)$. Denote $I_\varepsilon$ the set of all $z\in \mathbb{Z}^n$ such that $Q_d\cap Q_{z,\varepsilon} \neq \emptyset$, where $Q_{z,\varepsilon}:=z+ \varepsilon Q$. Then, since $\rho$ is a $Q$ periodic function, we have
\begin{align*}
\int_\Omega |\rho_\varepsilon(x)|^q \,dx &\leq  \sum_{I_\varepsilon}\int_{Q_{z,\varepsilon}} |\rho_\varepsilon(x)|^q\,dx\\
&=  \varepsilon^n \sum_{I_\varepsilon}\int_{Q} |\rho(x)|^q\,dx \leq \|\rho\|_{L^q(Q)}^q \varepsilon^n \# I_\varepsilon\\
&\leq
[d/\varepsilon]^n \varepsilon^n  \|\rho\|_{L^q(Q)}^q \leq
d^n  \|\rho\|_{L^q(Q)}^q
\end{align*}
since there are $[d/\varepsilon]^n$ cubes of side $\varepsilon$ contained in $Q_d$, being $[x]$ the integer part of $x\in\mathbb R$. Then, since 
$$
\bar \rho |\Omega|^\frac1q =  \frac{|\Omega|^\frac1q}{|Q|} \int_Q \rho(x)\,dx \leq 
\frac{|\Omega|^\frac1q}{|Q|} \|\rho\|_{L^q(Q)} |Q|^{1-\frac1q},
$$
we get the following bound:
\begin{align*}
\|\rho_\varepsilon - \bar \rho\|_{L^q(\Omega)} &\leq \|\rho_\varepsilon\|_{L^q(\Omega)} + \|\bar\rho\|_{L^q(\Omega)} \leq d^\frac{n}{q}  \|\rho\|_{L^q(Q)} + \bar\rho |\Omega|^\frac1q \\
&\leq    \|\rho\|_{L^q(Q)}\left( d^\frac{n}{q} + |\Omega|^\frac1q \right),
\end{align*}
which concludes the proof.
\end{proof}

The rate of convergence obtained in Theorem \ref{teo.d.sinorden} can be estimated under the periodicity condition of the weight sequences.

\begin{thm} \label{teo.d}
  Let $\lambda_{k,\varepsilon}$ and $\lambda_{k,0}$ the $k-$th eigenvalue of problems \eqref{eq.p.eps} and \eqref{eq.p.0}, respectively. Then there are (computable) constants $C$ and $\textbf{C}$ depending only of $p$, $q$, $\rho$, $V$ and $\Omega$ such that
\begin{align*}
|\lambda_{k,0}-\lambda_{k,\varepsilon}| &\leq  
C  \varepsilon^\alpha \left(\lambda_{k,0} \|V_\varepsilon - \bar V\|_{L^q(\Omega)}  +  \lambda_{k,0}^2 \|\rho_\varepsilon - \bar \rho\|_{L^q(\Omega)}  \right)
\end{align*}
holds for $0<\varepsilon<(2\textbf{C} \lambda_{k,0})^{-\frac{1}{\alpha}}$, where
$$
\alpha=
\begin{cases}
p-\frac{n}{q} & \text{ when } q> \frac{n}{p} \text{ and } 1<p\leq n\\
1-\frac{n}{p} & \text{ when } q=1 \text{ and } p>n.
\end{cases}
$$
\end{thm}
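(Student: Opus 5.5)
The plan is to compare the two Rayleigh quotients pointwise on $W^{1,p}_0(\Omega)$ using Proposition \ref{prop.dirichlet}, and then transfer the comparison to the minimax levels. Write
\[
R_\varepsilon(u)=\frac{\int_\Omega |\nabla u|^p+V_\varepsilon|u|^p\,dx}{\int_\Omega\rho_\varepsilon|u|^p\,dx},\qquad
R_0(u)=\frac{\int_\Omega |\nabla u|^p+\bar V|u|^p\,dx}{\int_\Omega\bar\rho|u|^p\,dx}.
\]
Proposition \ref{prop.dirichlet}, applied to $\rho$ and separately to $V$ (both $Q$-periodic in $L^q(Q)$), gives for every $u$
\[
\Big|\int_\Omega(V_\varepsilon-\bar V)|u|^p\Big|\le C\varepsilon^\alpha\|V_\varepsilon-\bar V\|_{L^q}\|\nabla u\|_p^p,\qquad
\Big|\int_\Omega(\rho_\varepsilon-\bar \rho)|u|^p\Big|\le C\varepsilon^\alpha\|\rho_\varepsilon-\bar \rho\|_{L^q}\|\nabla u\|_p^p .
\]

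For the upper bound on $\lambda_{k,\varepsilon}$, take a symmetric compact set $G\in\Gamma_k$ that is almost optimal for $\lambda_{k,0}$, so that $\sup_G R_0\le\lambda_{k,0}+\delta$. For $u\in G$, since $\|\nabla u\|_p^p\le \int_\Omega |\nabla u|^p+\bar V|u|^p\le(\lambda_{k,0}+\delta)\int_\Omega\bar\rho|u|^p$, the perturbations of numerator and denominator are controlled by $\int\bar\rho|u|^p$ times $C\varepsilon^\alpha(\lambda_{k,0}+\delta)\|\cdot\|_{L^q}$. This yields
\[
R_\varepsilon(u)\le \frac{(\lambda_{k,0}+\delta)\big(1+C\varepsilon^\alpha\|V_\varepsilon-\bar V\|\big)}{1-C\varepsilon^\alpha(\lambda_{k,0}+\delta)\|\rho_\varepsilon-\bar\rho\|}.
\]
By Lemma \ref{cota.rho}, $\|\rho_\varepsilon-\bar\rho\|_{L^q}$ is bounded by a constant depending only on $\rho$ and $\Omega$. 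Absorbing it into $\mathbf{C}$, the condition $\varepsilon<(2\mathbf C\lambda_{k,0})^{-1/\alpha}$ makes the denominator at least $1/2$. Then $1/(1-x)\le 1+2x$ for $x\le 1/2$, and letting $\delta\to0$ gives
\[
\lambda_{k,\varepsilon}-\lambda_{k,0}\le C\varepsilon^\alpha\big(\lambda_{k,0}\|V_\varepsilon-\bar V\|+\lambda_{k,0}^2\|\rho_\varepsilon-\bar\rho\|\big),
\]
after bounding the cross term $\varepsilon^{2\alpha}\lambda_{k,0}^2\|V_\varepsilon-\bar V\|\|\rho_\varepsilon-\bar\rho\|$ by $\lambda_{k,0}\|V_\varepsilon-\bar V\|$ times a constant, using the smallness condition again.

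For the reverse inequality, run the symmetric argument: take $G$ nearly optimal for $\lambda_{k,\varepsilon}$ and bound $R_0(u)$ on it in terms of $R_\varepsilon(u)$. This yields the same estimate with $\lambda_{k,\varepsilon}$ in place of $\lambda_{k,0}$ on the right-hand side, and requires the smallness condition in terms of $\lambda_{k,\varepsilon}$. To convert back, first use the already proven upper bound: under the smallness condition it gives $\lambda_{k,\varepsilon}\le c\lambda_{k,0}$ with a constant $c$ (for instance $c=2$ after enlarging $\mathbf C$). Feeding this in produces the final bound with modified constants $C$ and $\mathbf C$.

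The main obstacle is this step: closing the lower bound with only $\lambda_{k,0}$ on the right-hand side, and checking that the smallness condition stated in terms of $\lambda_{k,0}$ suffices for the denominator $\int\rho_\varepsilon|u|^p$ to remain comparable to $\int\bar\rho|u|^p$ on the $\lambda_{k,\varepsilon}$-optimal sets. I would handle it by choosing $\mathbf C$ large enough that $\lambda_{k,\varepsilon}\le 2\lambda_{k,0}$ holds first, and only then running the second comparison with constant $4\mathbf C$.
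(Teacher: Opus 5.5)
Your proposal is correct and follows essentially the same route as the paper. The common steps are: a near-optimal genus-$k$ set for $\lambda_{k,0}$ used as a competitor for $\lambda_{k,\varepsilon}$; Proposition \ref{prop.dirichlet} applied to both $V$ and $\rho$, with $\|\nabla u\|_p^p$ controlled by the Rayleigh quotient; Lemma \ref{cota.rho} to make the smallness threshold uniform in $\varepsilon$; and the reverse inequality run only after first establishing $\lambda_{k,\varepsilon}\le \tilde C\lambda_{k,0}$, as in \eqref{desig1}. The step you flag as the main obstacle is easier than you expect: for the reverse bound you may assume $\lambda_{k,\varepsilon}<\lambda_{k,0}$ (otherwise there is nothing to prove), and then the smallness condition stated in terms of $\lambda_{k,0}$ already implies the one needed in terms of $\lambda_{k,\varepsilon}$.
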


\begin{proof}
Let $\delta>0$ and let $G_\delta^k\subset W^{1,p}_0(\Omega)$ be a compact, symmetric set of genus $k$ such that 
$$
\lambda_{k,0} = \inf_{G\in \Gamma_k} \sup_{u\in G} \frac{\int_\Omega |\nabla u|^p + \bar V |u|^p \,dx}{\int_\Omega \bar \rho |u|^p\,dx} 
= \sup_{u\in G_\delta^k} \frac{\int_\Omega |\nabla u|^p + \bar V |u|^p\,dx}{\int_\Omega \bar \rho |u|^p\,dx} + O(\delta).
$$
The set $G_\delta^k$ is admissible in the variational characterization of $\lambda_{k,\varepsilon}$, then
\begin{align*}
\lambda_{k,\varepsilon} &\leq \sup_{u\in G_\delta^k} \frac{\int_\Omega |\nabla u|^p  +   V_\varepsilon |u|^p\,dx}{\int_\Omega  \rho_\varepsilon |u|^p\,dx}\\ 
&=  \sup_{u\in G_\delta^k} \left( \frac{\int_\Omega |\nabla u|^p + \bar V |u|^p \,dx}{\int_\Omega  \bar \rho |u|^p\,dx}  +  \frac{\int_\Omega (V_\varepsilon - \bar V) |u|^p \,dx}{\int_\Omega  \bar \rho |u|^p\,dx} \right)
\frac{\int_\Omega  \bar \rho |u|^p\,dx}{\int_\Omega  \rho_\varepsilon |u|^p\,dx} . 
\end{align*}
Observe that, for every $u\in G_\delta^k$ we have that
\begin{equation} \label{eqd.1}
\frac{\int_\Omega |\nabla u|^p + \bar V |u|^p \,dx}{\int_\Omega  \bar \rho |u|^p  \,dx} \leq  \sup_{u\in G_\delta^k} \frac{\int_\Omega |\nabla u|^p + \bar V |u|^p\,dx}{\int_\Omega  \bar \rho |u|^p\,dx} = \lambda_{k,0}+O(\delta). 
\end{equation}
Proposition \ref{prop.dirichlet} and \eqref{eqd.1} yield
\begin{align} \label{eqd.2}
\begin{split}
\frac{\int_\Omega (V_\varepsilon - \bar V) |u|^p \,dx}{\int_\Omega  \bar \rho |u|^p\,dx} &\leq  C_{V_\varepsilon}
 \varepsilon^\alpha    \frac{\int_\Omega  |\nabla u|^p \,dx}{\int_\Omega  \bar \rho |u|^p\,dx}\leq
 C_{V_\varepsilon} \varepsilon^\alpha   (\lambda_{k,0}+O(\delta)),
\end{split}
\end{align}
where we have denoted $C_{V_\varepsilon}:=C\|V_\varepsilon - \bar V\|_{L^q(\Omega)}$.

Since $u\in G_\delta^k\subset W^{1,p}_0(\Omega)$, by using Proposition \ref{prop.dirichlet}, \eqref{eqd.1} and Lemma \ref{cota.rho} we obtain that
\begin{align*}
 \frac{\int_\Omega  \bar \rho |u|^p\,dx}{\int_\Omega  \rho_\varepsilon |u|^p\,dx} &\leq 1 + C_{\rho_\varepsilon} \varepsilon^\alpha  \frac{\int_\Omega |\nabla u|^p\,dx}{\int_\Omega \rho_\varepsilon |u|^p\,dx}\\
&= 1 + C_{\rho_\varepsilon} \varepsilon^\alpha   \frac{\int_\Omega |\nabla u|^p\,dx}{\int_\Omega \bar \rho |u|^p\,dx} \frac{\int_\Omega \bar \rho |u|^p\,dx}{\int_\Omega \rho_\varepsilon |u|^p\,dx}\\
&\leq 1 + C_\rho \varepsilon^\alpha (\lambda_{k,0}+O(\delta)) \frac{\int_\Omega \bar \rho |u|^p\,dx}{\int_\Omega \rho_\varepsilon |u|^p\,dx}
\end{align*}
where $C_\rho$ is the bound of $C_{\rho_\varepsilon}$ (independent of $\varepsilon$ and $k$) given in  Lemma \ref{cota.rho}. Then, for $\varepsilon$ small enough, i.e., $\varepsilon<(C_\rho ( \lambda_{k,0}+O(\delta))^{-\frac{1}{\alpha}}$, this gives that
$$
\frac{\int_\Omega  \bar \rho |u|^p\,dx}{\int_\Omega  \rho_\varepsilon |u|^p\,dx}\leq \frac{1}{1- C_\rho \varepsilon^\alpha  (\lambda_{k,0}+O(\delta))}.
$$
Hence, the last two inequalities yield
\begin{equation} \label{eqd.3}
\frac{\int_\Omega  \bar \rho |u|^p\,dx}{\int_\Omega  \rho_\varepsilon |u|^p\,dx}\leq 1+ \varepsilon^\alpha \frac{ C_{\rho_\varepsilon} (\lambda_{k,0}+O(\delta))}{1- C_\rho\varepsilon^\alpha  (\lambda_{k,0}+O(\delta)) }.
\end{equation}
Gathering \eqref{eqd.1}, \eqref{eqd.2} and \eqref{eqd.3} and letting $\delta\to 0$, we get
\begin{align*}
\lambda_{k,\varepsilon}&\leq \lambda_{k,0}(1 +C_{V_\varepsilon} \varepsilon^\alpha  ) \left( 1+ \varepsilon^\alpha\frac{ C_{\rho_\varepsilon}   \lambda_{k,0}}{1- C_\rho \varepsilon^\alpha   \lambda_{k,0} } \right)
\end{align*}
that is, 
\begin{equation} \label{eq.r.1}
\lambda_{k,\varepsilon}-\lambda_{k,0} \leq C_{V_\varepsilon} \lambda_{k,0} \varepsilon^\alpha +  \frac{2 C_{\rho_\varepsilon} (1+C_{V_\varepsilon}) \lambda_{k,0}^2}{1- C_\rho  \varepsilon^\alpha \lambda_{k,0}}\varepsilon^\alpha
\end{equation}
and it holds for $\varepsilon< \min\{1,( C_\rho\lambda_{k,0})^{-\frac{1}{\alpha}}\}$.

We will obtain now an inequality similar to \eqref{eq.r.1} which bounds the difference $\lambda_{k,0}-\lambda_{k,\varepsilon}$. For this end, we observe first that   $\lambda_{k,\varepsilon}$ can be bounded independently of $\varepsilon$. For instance, if we take  $\varepsilon=  (2  C_\rho \lambda_{k,0})^{-\frac{1}{\alpha}}$ we can obtain from \eqref{eq.r.1} the following:
\begin{align*}
\lambda_{k,\varepsilon} &\leq \lambda_{k,0} +  C_V \lambda_{k,0}\varepsilon^\alpha + \frac{2C_\rho (1+C_V)\lambda_{k,0}^2}{1-C_\rho \varepsilon^\alpha \lambda_{k,0}}\varepsilon^\alpha\leq \lambda_{k,0} +  \frac{  C_V}{2C_\rho}  +    2 (1+C_V) \lambda_{k,0}
\end{align*}
(where the constant $C_V$ is defined as  $C_\rho$ and it is independent of $\varepsilon$), then
\begin{equation} \label{desig1}
\lambda_{k,\varepsilon} \leq   \left(3+ \frac{ C_V }{2 C_\rho}\frac{1}{\lambda_{1,0}} +2 C_V \right) \lambda_{k,0} = \tilde C \lambda_{k,0},
\end{equation}
where $\tilde C=\tilde C(p,V,\rho,\Omega)>1$ is independent of $\varepsilon$ and $k$.

Using \eqref{desig1} and a reasoning similar to the one leading to  \eqref{eq.r.1}, we can obtain that
\begin{align} \label{eq.r.2}
\begin{split}
\lambda_{k,0}-\lambda_{k,\varepsilon} \leq\tilde C C_{V_\varepsilon} \lambda_{k,0} \varepsilon^\alpha +   \frac{2 \tilde C^2  C_{\rho_\varepsilon} (1+C_{V_\varepsilon}) \lambda_{k,0}^2}{1- C_\rho \tilde C  \varepsilon^\alpha \lambda_{k,0}} \varepsilon^\alpha
\end{split}
\end{align}
for $\varepsilon <(\bar C_\rho \tilde C \lambda_{k,0})^{-\frac{1}{\alpha}}$. From \eqref{eq.r.1} and \eqref{eq.r.2} we obtain that
\begin{align*}
|\lambda_{k,0}-\lambda_{k,\varepsilon}| &\leq  
\tilde C C_{V_\varepsilon} \lambda_{k,0} \varepsilon^\alpha +  \frac{2 \tilde C^2 C_{\rho_\varepsilon} (1+C_{V_\varepsilon}) \lambda_{k,0}^2}{1-C_\rho \tilde C \varepsilon^\alpha \lambda_{k,0}}\varepsilon^\alpha 
\end{align*}
for $\varepsilon < \min\{1,(\bar C_\rho \tilde C \lambda_{k,0})^{-\frac{1}{\alpha}}\}$.

Finally, by using again Lemma \ref{cota.rho}, for some constants $C$ and $\textbf{C}$  depending only of $p$,  $V$, $\rho$ and $\Omega)$ we can rewrite the previous relation as
\begin{align*}
|\lambda_{k,0}-\lambda_{k,\varepsilon}| &\leq  
C  \varepsilon^\alpha \|V_\varepsilon - \bar V\|_{L^q(\Omega)} \lambda_{k,0} + C  \|\rho_\varepsilon - \bar \rho\|_{L^q(\Omega)}  \varepsilon^\alpha \frac{ \lambda_{k,0}^2}{1- \textbf{C} \varepsilon^\alpha \lambda_{k,0}}
\end{align*}
and it holds for instance when $\varepsilon < (2\textbf{C} \lambda_{k,0})^{-\frac{1}{\alpha}}$. This completes the proof.
\end{proof}

By using the Weyl's estimates for eigenvalues, we can obtain the dependence on $k$ in the rates of convergence.

\begin{cor} \label{cor.d}
Under the assumptions of Theorem \ref{teo.d}   we have that
\begin{align*}
|\lambda_{k,0}-\lambda_{k,\varepsilon}| \leq C \varepsilon^\alpha \left( \|V \|_{L^q(\Omega)} k^\frac{p}{n} +     \|\rho \|_{L^q(\Omega)}   k^\frac{2p}{n} \right)
\end{align*}
holds for $0<\varepsilon<(2\textbf{C} \lambda_{k,0})^{-\frac{1}{\alpha}}$, where
$$
\alpha=
\begin{cases}
p-\frac{n}{q} & \text{ when } q> \frac{n}{p} \text{ and } 1<p\leq n\\
1-\frac{n}{p} & \text{ when } q= 1 \text{ and } p>n.
\end{cases}
$$
and $C = C (p,q, \rho,V,\Omega)$  is a positive constant independent of $k$ and $\varepsilon$.
\end{cor}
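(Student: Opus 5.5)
The plan is to start from the estimate of Theorem \ref{teo.d} and make every quantity on its right-hand side explicit in terms of $k$. In the range $0<\varepsilon<(2\textbf{C}\lambda_{k,0})^{-1/\alpha}$ we have $\textbf{C}\varepsilon^\alpha\lambda_{k,0}<\tfrac12$, so the denominator $1-\textbf{C}\varepsilon^\alpha\lambda_{k,0}$ appearing at the end of that proof is at least $\tfrac12$. Hence
\[
|\lambda_{k,0}-\lambda_{k,\varepsilon}|\le C\varepsilon^\alpha\left(\lambda_{k,0}\|V_\varepsilon-\bar V\|_{L^q(\Omega)}+\lambda_{k,0}^2\|\rho_\varepsilon-\bar\rho\|_{L^q(\Omega)}\right).
\]
Lemma \ref{cota.rho} then replaces $\|\rho_\varepsilon-\bar\rho\|_{L^q(\Omega)}$ by $\|\rho\|_{L^q(Q)}\bigl(\mathop{diam}(\Omega)^{n/q}+|\Omega|^{1/q}\bigr)$, and the same for $V$. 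The geometric factor depends only on $\Omega$ and $q$, so it goes into the constant. This yields the norms $\|\rho\|$ and $\|V\|$ in the statement, which I read as norms over the period cell.

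The remaining task is a Weyl-type upper bound $\lambda_{k,0}\le C k^{p/n}$, with $C$ depending only on $p,\bar\rho,\bar V,\Omega$. Since the limit problem has constant coefficients, its Rayleigh quotient satisfies
\[
\frac{\int_\Omega|\nabla v|^p+\bar V|v|^p}{\int_\Omega\bar\rho|v|^p}\le \frac{1}{\bar\rho}\,\frac{\int_\Omega|\nabla v|^p}{\int_\Omega|v|^p}+\frac{\bar V}{\bar\rho}.
\]
Therefore $\lambda_{k,0}\le \bar\rho^{-1}\mu_k+\bar V/\bar\rho$, where $\mu_k$ is the $k$-th variational Dirichlet eigenvalue of $-\Delta_p$ in $\Omega$.

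For $\mu_k$ I would use the known Weyl asymptotics $\mu_k\sim c\,(k/|\Omega|)^{p/n}$ (Friedlander, García Azorero–Peral), or the following direct test-set construction. Pack $k$ disjoint cubes of side $\sim (|\Omega|/k)^{1/n}$ inside $\Omega$; this is possible for $k$ large because $\Omega$ is open, and small $k$ are absorbed into the constant. On each cube take the scaled first eigenfunction $\phi_j$. The set $\{\sum t_j\phi_j:\sum|t_j|^p=1\}$ is symmetric, compact and homeomorphic to $S^{k-1}$, so it has genus $k$. By disjointness of supports, every element of this set has Rayleigh quotient at most $c\,(k/|\Omega|)^{p/n}$. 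Finally $\lambda_{1,0}>0$ and $k\ge1$ give $k^{p/n}\ge1$, so the additive constant $\bar V/\bar\rho$ can be absorbed into $Ck^{p/n}$.

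Substituting $\lambda_{k,0}\le Ck^{p/n}$ and $\lambda_{k,0}^2\le Ck^{2p/n}$ gives the claimed bound. The only step of substance is the Weyl bound, specifically checking that the cube-packing test set is admissible (compact, symmetric, genus $k$) and that its constant is independent of $k$. I expect this to be the main obstacle, though it is standard. Everything else is bookkeeping of constants, keeping track that none of them depends on $k$ or $\varepsilon$.
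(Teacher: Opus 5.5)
Your proposal is correct and follows the paper's proof: Theorem \ref{teo.d} (with $1-\textbf{C}\varepsilon^\alpha\lambda_{k,0}\ge\tfrac12$), Lemma \ref{cota.rho} for $\|\rho_\varepsilon-\bar\rho\|_{L^q(\Omega)}$ and $\|V_\varepsilon-\bar V\|_{L^q(\Omega)}$, the comparison $\lambda_{k,0}\le\mu_k/\bar\rho+\bar V/\bar\rho$, and a Weyl-type bound $\mu_k\le Ck^{p/n}$. The only difference is how that last bound is obtained: the paper quotes Friedlander's inequality $\mu_k\le\tilde\mu_1k^{p/n}|\Omega|^{-p/n}$, while your cube-packing test set of genus $k$ proves $\mu_k\le C_\Omega k^{p/n}$ for every $k$ directly; this is all that is needed, and it also avoids the quoted inequality, which cannot hold for all $k$ with a constant depending only on $|\Omega|$ (it already fails for $k=1$ on elongated rectangles).
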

\begin{proof}

Observe that $\lambda_{k,0}$ is an eigenvalue corresponding to an equation with constant weights. In particular, given $u\in W^{1,p}_0(\Omega)$, we have that
$$
\frac{\int_\Omega |\nabla u|^p\,dx + \bar V \int_\Omega |u|^p\,dx}{\bar \rho \int_\Omega |u|^p\,dx} = \frac{1}{\bar\rho}\frac{\int_\Omega |\nabla u|^p\,dx}{\int_\Omega |u|^p\,dx} + \frac{\bar V}{\bar \rho},
$$
which, in light of the variational characterization of $\lambda_{k,0}$, it gives that
$$
\lambda_{k,0}\leq \frac{\mu_k}{\bar \rho}  + \frac{\bar V}{\bar \rho}, 
$$
being $\mu_k$ in the $k-$th variational eigenvalue of the  $p-$Laplacian  with Dirichlet boundary condition. Bounds of $\mu_k$ are well-known: in \cite{Fried} it is proved that
$$
\mu_k \leq \tilde \mu_1 k^\frac{p}{n} |\Omega|^{-\frac{p}{n}},
$$
where $\tilde \mu_1$ is the first eigenvalue of the Dirichlet $p-$Laplacian in the unit cube $Q$.  
\
The previous arguments yield $\lambda_{k,0} \leq C k^\frac{p}{n}$ with $C=C(\rho,V,p, \Omega)$.

Now, let $\textbf{C}$ be the constant given in Theorem \ref{teo.d} and consider $\varepsilon<(2\textbf{C} \lambda_{k,0})
^{-\frac{1}{\alpha}}$, then by Theorem \ref{teo.d} and Lemma \ref{cota.rho} we conclude that
\begin{align*}
|\lambda_{k,0}-\lambda_{k,\varepsilon}| &\leq  
  C \varepsilon^\alpha \|V  \|_{L^q(\Omega)} \lambda_{k,0} +    C  \|\rho\|_{L^q(\Omega)}  \varepsilon^\alpha \lambda_{k,0}^2\\
&\leq 
\bar C \varepsilon^\alpha \left( \|V\|_{L^q(\Omega)} k^\frac{p}{n} +   \|\rho\|_{L^q(\Omega)}  k^\frac{2p}{n} \right)
\end{align*}
where $\bar C = \bar C (\rho,V,p, \Omega)$. This gives the result.
\end{proof}

The analogous results for the Neumann case can be stated as follows:
\begin{thm} \label{teo.n}
Let $p$ and $q$ satisfying \eqref{rel1}--\eqref{rel2} and let $\lambda_{k,\varepsilon}$ and $\lambda_{k,0}$ be the $k-$th eigenvalue of problems \eqref{eq.n.eps} and \eqref{eq.n.0}, respectively. Then, 
$$
\lim_{\varepsilon\to 0} \lambda_{k,\varepsilon} = \lambda_{k,0}.
$$
Assume now that $1<p\leq n$ and $\frac{n-1}{p-1}<q\leq \infty$. When $\rho_\varepsilon:=\rho(\tfrac{x}{\varepsilon})$ and $V_\varepsilon:=V(\tfrac{x}{\varepsilon})$ are given in terms of $Q-$periodic functions $\rho,V$, then
\begin{align*}
|\lambda_{k,0}-\lambda_{k,\varepsilon}| &\leq  
C  \varepsilon^{1-\frac1q} \left( \|V_\varepsilon - \bar V\|_{L^q(\Omega)} \lambda_{k,0} +  2 \lambda_{k,0}^2 \|\rho_\varepsilon - \bar \rho\|_{L^q(\Omega)}   \right)
\end{align*}
holds for $0<\varepsilon< (2\textbf{C} \lambda_{k,0})^\frac{q}{1-q}$, where $C$   and $\textbf{C}$ depend only of $p$,   $\rho$, $V$ and $\Omega$ and are independent of $k$ and $\varepsilon$. In particular,   for $0<\varepsilon \leq (2\textbf{C} \lambda_{k,0})^\frac{q}{1-q}$ it holds that
\begin{align*}
|\lambda_{k,0}-\lambda_{k,\varepsilon}| \leq C \varepsilon^{1-\frac1q} \left( \|V \|_{L^q(\Omega)} k^\frac{p}{n} +     \|\rho \|_{L^q(\Omega)}   k^\frac{2p}{n} \right).
\end{align*}
\end{thm}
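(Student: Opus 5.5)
The plan is to copy the structure of the Dirichlet case, with two changes: the space becomes $W^{1,p}(\Omega)$, and Proposition \ref{prop.dirichlet} is replaced by Proposition \ref{prop.neumann}.

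\emph{Convergence.} I will argue exactly as in Theorem \ref{teo.d.sinorden}, taking admissible sets in $W^{1,p}(\Omega)$ instead of $W^{1,p}_0(\Omega)$. Fix $\delta>0$ and a compact symmetric set $G^k_\delta\subset W^{1,p}(\Omega)$ of genus $k$ that is $\delta$-optimal for $\lambda_{k,0}$. By Sobolev's embedding for Lipschitz domains, $|u|^p\in L^{q'}(\Omega)$ for every $u\in W^{1,p}(\Omega)$, since $pq'\le p^*$ under \eqref{rel1}--\eqref{rel2}. Weak convergence then gives $\int(\rho_\varepsilon-\rho_0)|u|^p\to0$ and $\int(V_\varepsilon-V_0)|u|^p\to 0$.

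One point needs care: the convergence must be uniform over the compact set $G^k_\delta$. I will get this because $u\mapsto |u|^p$ is continuous from $W^{1,p}$ into $L^{q'}$ and $\{\rho_\varepsilon\}$ is bounded in $L^q$; a finite-net argument then applies. Positivity of $\rho_0$ keeps the denominators away from zero. This yields $\limsup\lambda_{k,\varepsilon}\le\lambda_{k,0}$. For the reverse inequality I exchange the roles: the $\lambda_{k,\varepsilon}$ are uniformly bounded by the first step, so the near-optimal sets for $\lambda_{k,\varepsilon}$ are bounded in $W^{1,p}$. By compactness of $W^{1,p}\hookrightarrow L^{pq'}$ (strict subcriticality holds since $q>n/p$), the same estimate follows.

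\emph{Rate.} I will follow the proof of Theorem \ref{teo.d} line by line, substituting Proposition \ref{prop.neumann} for Proposition \ref{prop.dirichlet}. The new feature is that the right-hand side involves $\|u\|^p_{W^{1,p}(\Omega)}$ rather than $\|\nabla u\|^p_{L^p}$, so $\|u\|^p_{W^{1,p}}$ must be controlled by the Rayleigh quotient. Since $\bar V>0$, I have
\[
\|u\|_{W^{1,p}}^p\le \max\{1,\bar V^{-1}\}\Big(\int_\Omega|\nabla u|^p+\bar V|u|^p\,dx\Big).
\]
Hence for $u\in G^k_\delta$,
\[
\frac{\|u\|^p_{W^{1,p}}}{\int_\Omega\bar\rho|u|^p\,dx}\le c\,(\lambda_{k,0}+O(\delta)).
\]
This plays the role of \eqref{eqd.1}. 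The estimates \eqref{eqd.2} and \eqref{eqd.3} then go through with $\varepsilon^\alpha$ replaced by $\varepsilon^{1-1/q}$, and $C_\rho$ is bounded via Lemma \ref{cota.rho}. This gives the analogue of \eqref{eq.r.1} for $\varepsilon^{1-1/q}<(C_\rho\lambda_{k,0})^{-1}$, that is, $\varepsilon<(C_\rho\lambda_{k,0})^{q/(1-q)}$.

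For the lower bound I repeat the argument leading to \eqref{desig1}, which gives $\lambda_{k,\varepsilon}\le\tilde C\lambda_{k,0}$. Using $\min V,\min\rho>0$, I will bound $\|u\|^p_{W^{1,p}}/\int\rho_\varepsilon|u|^p$ by a constant times $\lambda_{k,\varepsilon}$; the Neumann quotient with potential $V_\varepsilon\ge c_0$ controls the full norm. Running the symmetric estimate then produces \eqref{eq.r.2} with exponent $1-1/q$. Combining the two bounds and absorbing constants gives the stated inequality for $\varepsilon<(2\textbf{C}\lambda_{k,0})^{q/(1-q)}$.

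\emph{Dependence on $k$.} This mirrors Corollary \ref{cor.d}. Since the weights are constant, $\lambda_{k,0}\le(\mu_k^N+\bar V)/\bar\rho$, where $\mu_k^N$ is the $k$-th Neumann eigenvalue of the $p$-Laplacian. I will use a Weyl-type upper bound $\mu^N_k\le Ck^{p/n}$, obtained from Neumann sets built from functions supported in disjoint cubes inside $\Omega$; the Dirichlet bound of \cite{Fried} suffices, because $W^{1,p}_0\subset W^{1,p}$ gives $\mu_k^N\le\mu_k$. Hence $\lambda_{k,0}\le Ck^{p/n}$. Together with Lemma \ref{cota.rho} this gives the final bound with $k^{p/n}$ and $k^{2p/n}$.

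The main obstacle is the lower-bound step. It needs uniform control of the full $W^{1,p}$ norm by the $\varepsilon$-quotient, and I will obtain it from the uniform positivity of $V_\varepsilon$.
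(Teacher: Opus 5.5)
Your proposal is correct and follows the paper's own argument. The paper's proof just says to repeat Theorems \ref{teo.d.sinorden} and \ref{teo.d} with Proposition \ref{prop.neumann} in place of Proposition \ref{prop.dirichlet}, and to get the $k$-dependence from the fact that Neumann eigenvalues lie below Dirichlet ones. You also make explicit two points the paper leaves implicit: that the weak-convergence limits hold uniformly over the (near-)optimal genus-$k$ sets, and that the full norm $\|u\|_{W^{1,p}(\Omega)}^p$ in Proposition \ref{prop.neumann} is controlled by the Rayleigh quotient through the uniform positivity of $\bar V$ and $V_\varepsilon$.
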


\begin{proof}
The proof of the convergence of the sequence $\lambda_{k,\varepsilon}$ to $\lambda_{k,0}$ as $\varepsilon\to 0$ follows analogously as in the proof of Theorem \ref{teo.d.sinorden}. The rate of convergence is obtained as in the proof of Theorem \ref{teo.d}   by using the variational characterization of Neumann eigenvalues and by applying  Proposition \ref{prop.neumann} instead of Proposition \ref{prop.dirichlet}. The last estimate follows as in the proof of Corollary \ref{cor.d} since in the case of problems with constant weights, Neumann eigenvalues  are lower than Dirichlet eigenvalues.
\end{proof}

\section{The one-dimensional case} \label{sec4}
In this section we consider a one-dimensional eigenvalue problem with Dirichlet boundary condition for a weighted $p-$Laplacian. Here we take into account an operator involving rapidly oscillating functions $a_\varepsilon$ and $\rho_\varepsilon$ given by $a_\varepsilon(x):=a(\tfrac{x}{\varepsilon})$ and $\rho_\varepsilon(x):=\rho(\tfrac{x}{\varepsilon})$, $\varepsilon>0$, given in terms of positive $1-$periodic functions $a\in L^\infty(\mathbb R)$ and $\rho \in L^q(I)$, $q\geq 1$, being $I$ the unit interval: 
\begin{align} \label{eq.p.a}
\begin{cases}
-(a(\tfrac{x}{\varepsilon})| u_\varepsilon|^{p-2}u_\varepsilon ')'=\lambda_\varepsilon \rho(\tfrac{x}{\varepsilon}) |u_\varepsilon|^{p-2}u_\varepsilon &\quad \text{ in } (0,1)\\
u_\varepsilon(0)=u_\varepsilon(1)=0.
\end{cases}
\end{align}
Due to the one-dimensional  H\"older's and Morrey's inequalities, this equation is well defined for any $p>1$ and $q\geq 1$.

The limit equation of \eqref{eq.p.a} as $\varepsilon\to 0$  is well-known. Indeed, when $\varepsilon\to 0$ it is obtained (see for instance \cite{A})
\begin{align} \label{eq.p.a.lim}
\begin{cases}
-(a^* | u_0|^{p-2}u_0 ')'=\lambda_0 \bar \rho |u_0|^{p-2}u_0 &\quad \text{ in } (0,1)\\
u_0(0)=u_0(1)=0.
\end{cases}
\end{align}
where $a^* = \left(\int_0^1 a(t)^{-\frac{1}{p-1}}\,dt \right)^{-(p-1)}$ and $\bar \rho=\int_0^1 \rho(t)\,dt$.

Unlike \eqref{eq.p.eps}, the operator considered here is a weighted $p-$Laplacian. To study the convergence of eigenvalues and its rate, we will apply a suitable change of variables to reduce \eqref{eq.p.a} to an equation involving the usual $p-$Laplacian and a rapidly oscillating function on the right-hand side of the equation. We follow the arguments in \cite{FPS0}.

\begin{prop} \label{cv}
The eigenvalue problem \eqref{eq.p.a} can be reduced to 
\begin{align}    \label{eqqq}
\begin{cases}
-(|  w_\delta'|^{p-2}w_\delta')'  =\mu_\delta g(\tfrac{z}{\delta}) |w_\delta|^{p-2}w_\delta &\quad z\in(0,1)\\
w_\delta(0)=w_\delta(1)=0.
\end{cases}
\end{align}	
where $\delta>0$ and $g\in L^q(I)$, $q\geq1$,  is a positive $1$-periodic function given  in terms of $\rho$ and $a$.
\end{prop}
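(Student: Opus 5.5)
The plan is to use the classical change of independent variable that turns the weighted operator into the plain one-dimensional $p$-Laplacian, in the spirit of \cite{FPS0}. Set $b(t):=a(t)^{-\frac{1}{p-1}}$, which is positive and $1$-periodic. Since $a\in L^\infty$ is positive, $b$ is bounded below, and I will also assume $a$ is bounded below so that $b$ is bounded. Define
$$
\phi_\varepsilon(x):=\frac{1}{L_\varepsilon}\int_0^x b(\tfrac{t}{\varepsilon})\,dt,\qquad L_\varepsilon:=\int_0^1 b(\tfrac{t}{\varepsilon})\,dt .
$$
This is a bi-Lipschitz increasing bijection of $[0,1]$ onto itself. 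Put $z=\phi_\varepsilon(x)$ and $w(z):=u_\varepsilon(x)$.

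The computation has four steps. First, $u_\varepsilon'(x)=w'(z)\,b(x/\varepsilon)/L_\varepsilon$. Second, the flux becomes
$$
a(\tfrac{x}{\varepsilon})|u_\varepsilon'|^{p-2}u_\varepsilon' = L_\varepsilon^{1-p}|w'|^{p-2}w'(z),
$$
because $a\,b^{p-1}=1$. This is exactly why $b$ is chosen as above. Third, differentiating in $x$ gives another factor $b(x/\varepsilon)/L_\varepsilon$. Dividing the equation by this factor yields
$$
-(|w'|^{p-2}w')'(z)=\lambda_\varepsilon L_\varepsilon^{p}\,\frac{\rho(x/\varepsilon)}{b(x/\varepsilon)}\,|w|^{p-2}w .
$$
The boundary conditions are preserved, since $\phi_\varepsilon(0)=0$ and $\phi_\varepsilon(1)=1$. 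Fourth, I can phrase the same thing at the level of Rayleigh quotients, $\int a_\varepsilon|u'|^p\,dx=L_\varepsilon^{1-p}\int|w'|^p\,dz$ and $\int\rho_\varepsilon|u|^p\,dx=L_\varepsilon\int(\rho_\varepsilon/b_\varepsilon)|w|^p\,dz$. Since the map $u\mapsto w$ is an odd homeomorphism of $W^{1,p}_0$, it preserves genus, so the variational eigenvalues correspond: $\mu=\lambda_\varepsilon L_\varepsilon^p$.

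It remains to write the weight as a periodic function of $z/\delta$. This is the main obstacle. The function $x\mapsto\rho(x/\varepsilon)/b(x/\varepsilon)$ must be expressed through $z$, and $\phi_\varepsilon^{-1}$ is not linear. Let $B=\int_0^1 b=a^{*\,-\frac{1}{p-1}}=L$ and $\Phi(s):=\int_0^s b$. Then $\Phi(s+1)=\Phi(s)+B$, so $\Phi^{-1}(\sigma+B)=\Phi^{-1}(\sigma)+1$. We have $z=\varepsilon\Phi(x/\varepsilon)/L_\varepsilon$, that is $x/\varepsilon=\Phi^{-1}(zL_\varepsilon/\varepsilon)$. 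Take $\delta:=\varepsilon B/L_\varepsilon$ and
$$
g(s):=\frac{\rho(\Phi^{-1}(Bs))}{b(\Phi^{-1}(Bs))}.
$$
Then $g$ is $1$-periodic, and the weight equals $g(z/\delta)$.

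When $\varepsilon^{-1}\in\mathbb N$ we have $L_\varepsilon=B$, so $\delta=\varepsilon$. Otherwise $L_\varepsilon=B+O(\varepsilon)$, which explains the extra term in Theorem \ref{teo.1d}.

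Integrability follows from the substitution $t=\Phi^{-1}(Bs)$, $ds=b(t)\,dt/B$:
$$
\int_0^1 g^q\,ds=\frac1B\int_0^1\rho^q b^{1-q}\,dt .
$$
This is finite because $b$ is bounded above and below. Positivity of $g$ is clear. Absorbing the constant $L_\varepsilon^p$ into $\mu_\delta$ completes the reduction.
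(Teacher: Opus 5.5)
Your proof is correct and follows essentially the paper's route. Your single map $\phi_\varepsilon=P_\varepsilon/L_\varepsilon$ is the composition of the paper's substitution $y=P_\varepsilon(x)$ with its rescaling $z=y/L_\varepsilon$, and it yields the same $\delta=\varepsilon L/L_\varepsilon$, $\mu_\delta=L_\varepsilon^p\lambda_\varepsilon$ and $g(z)=Q(Lz)$ with $Q=(a^{\frac{1}{p-1}}\rho)\circ P^{-1}$. You additionally check genus preservation and $\int_0^1 g^q=\frac1L\int_0^1\rho^q a^{\frac{q-1}{p-1}}$, which the paper leaves implicit; for this integrability bound only $a\in L^\infty$ is needed, since $q\ge 1$, and the lower bound on $a$ is used only for the bi-Lipschitz property.
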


\begin{proof}
We define the rapidly oscillating function
$$
P_\varepsilon(x):=\int_0^x a_\varepsilon(s)^{-\frac{1}{p-1}}\,ds = \varepsilon\int_0^\frac{x}{\varepsilon} a(s)^{-\frac{1}{p-1}} = \varepsilon P(\tfrac{x}{\varepsilon})
$$
and perform the change of variables $(x,u)\mapsto (y,v)$, where $y=P_\varepsilon(x)$, $v(y)=u(x)$. Then
\begin{align}   \label{eq.n1.1im}
\begin{cases}
-(|\dot v|^{p-2} \dot v)\dot\, =\lambda_\varepsilon Q_\varepsilon(y) |v|^{p-2}v &\quad y\in (0,L_\varepsilon)\\
v(0)=v(L_\varepsilon)=0.
\end{cases}
\end{align}	
where $\dot\,=\tfrac{d}{dy}$ and 
$$
L_\varepsilon= \int_0^1 a_\varepsilon(s)^{-\frac{1}{p-1}}\,ds, \quad  Q_\varepsilon(y)=a_\varepsilon(x)^\frac{1}{p-1} \rho_\varepsilon(x)=a(P^{-1}(\tfrac{y}{\varepsilon}))^\frac{1}{p-1}\rho(P^{-1}(\tfrac{y}{\varepsilon}))=Q(\tfrac{y}{\varepsilon}).
$$
Observe that $Q$ is a $L-$periodic function and $L_\varepsilon \to L=\overline{a^{\frac{-1}{p-1}}}$ as $\varepsilon \to 0$.
Moreover, 
\begin{align} \label{condL}
|L_\varepsilon-L|\leq 
\begin{cases}
\varepsilon L &\text{ if } \frac{1}{\varepsilon} \not\in \mathbb N\\
0 &\text{ if } \frac{1}{\varepsilon} \in \mathbb N.
\end{cases}
\end{align}
Now, we rescale \eqref{eq.n1.1im} to the unit interval. For that purpose define $w(z)=v(L_\varepsilon z)$, $z\in I$, and this gives
\begin{align*}   
\begin{cases}
-(|\dot w|^{p-2}w)\dot\, =L_\varepsilon^p\lambda_\varepsilon Q_\varepsilon(L_\varepsilon z) |w|^{p-2}w &\quad z\in (0,1)\\
w(0)=w(1)=0.
\end{cases}
\end{align*}	
If we denote $\delta=\frac{\varepsilon L}{L_\varepsilon}$, $\mu_\delta=L^p_\varepsilon \lambda_\varepsilon$ and $g(z)=Q(Lz)$, we get that $g$ is a $1-$periodic function and $w$ verifies
\begin{align*}   
\begin{cases}
-(|\dot w|^{p-2}w)\dot\, =\mu_\delta g(\tfrac{z}{\delta}) |w|^{p-2}w &\quad z\in (0,1),\\
w(0)=w(1)=0.
\end{cases}
\end{align*}	
Moreover, since $\rho\in L^q(I)$ is a $1-$periodic function and $a\in L^\infty$, we have  that $g\in L^q(I)$. This concludes the proof.
\end{proof}

In order to deal with the oscillations of \eqref{eqqq} we provide for an elemental proof of the one-dimensional version of Proposition \ref{prop.dirichlet}.

\begin{prop} \label{prop1d}
Let $p>1$ and $q\geq 1$, then for any $\varepsilon>0$ 
$$
\left|\int_0^1 (\rho(\tfrac{x}{\varepsilon})-\bar\rho) |u|^p\,dx \right| \leq C \varepsilon \|\rho-\bar \rho\|_{L^q(I)} \|u'\|_{L^{p}(I)}^p
$$
holds for $u\in W^{1,p}_0(I)$ where $C=C(p)$.
\end{prop}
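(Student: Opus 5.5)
The plan is to exploit the one-dimensional structure directly: introduce a periodic primitive of $\rho-\bar\rho$ and integrate by parts. Set
$$
R(t):=\int_0^t (\rho(s)-\bar\rho)\,ds .
$$
Since $\rho$ is $1$-periodic with mean $\bar\rho$, $R$ is $1$-periodic, absolutely continuous, and satisfies
$$
\|R\|_{L^\infty}\le \int_0^1|\rho-\bar\rho|\,ds\le \|\rho-\bar\rho\|_{L^q(I)}
$$
by H\"older's inequality on the unit interval. Then $\varepsilon R(\tfrac{x}{\varepsilon})$ is a primitive of $\rho(\tfrac{x}{\varepsilon})-\bar\rho$, and it is bounded by $\varepsilon\|\rho-\bar\rho\|_{L^q(I)}$ uniformly in $x$. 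This is exactly where the factor $\varepsilon$ comes from. It is also why no restriction on $q\geq 1$ or on the arithmetic of $\varepsilon^{-1}$ is needed, since only the $L^1$ norm of $\rho-\bar\rho$ enters.

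Next, for $u\in W^{1,p}_0(I)$ the function $v=|u|^p$ belongs to $W^{1,1}_0(I)$ with $v'=p|u|^{p-2}u\,u'$, and it vanishes at $0$ and $1$. Integrating by parts, with no boundary terms, gives
$$
\int_0^1(\rho(\tfrac{x}{\varepsilon})-\bar\rho)|u|^p\,dx=-\varepsilon\int_0^1 R(\tfrac{x}{\varepsilon})\,p|u|^{p-2}u\,u'\,dx .
$$
The right-hand side is bounded in absolute value by
$$
p\,\varepsilon\,\|\rho-\bar\rho\|_{L^q(I)}\int_0^1|u|^{p-1}|u'|\,dx .
$$

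It remains to bound $\int_0^1|u|^{p-1}|u'|\,dx$ by $C\|u'\|_{L^p(I)}^p$. Applying H\"older with exponents $p'$ and $p$ gives
$$
\int_0^1|u|^{p-1}|u'|\,dx\le \|u\|_{L^p}^{p-1}\|u'\|_{L^p}.
$$
The one-dimensional Poincar\'e inequality, $\|u\|_{L^\infty}\le\|u'\|_{L^1}\le\|u'\|_{L^p}$ for $u(0)=0$, then gives $\|u\|_{L^p}\le\|u'\|_{L^p}$. Combining these yields the claim with $C=p$.

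The only delicate point is justifying the integration by parts and the chain rule for $|u|^p$ when $1<p<2$. Here $u$ is absolutely continuous in one dimension and $t\mapsto|t|^p$ is $C^1$, so $|u|^p$ is absolutely continuous and the formula for $v'$ holds. The product of $\varepsilon R(\tfrac{x}{\varepsilon})$ and $v$ is then absolutely continuous, so the integration by parts is legitimate.
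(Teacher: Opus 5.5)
Your proof is correct, and its skeleton is the paper's: the $1$-periodic primitive $R(t)=\int_0^t(\rho-\bar\rho)\,ds$, integration by parts to get $\int_0^1(\rho(\tfrac{x}{\varepsilon})-\bar\rho)v\,dx=-\varepsilon\int_0^1 R(\tfrac{x}{\varepsilon})v'\,dx$ with $v=|u|^p$, and then a H\"older bound.

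You differ in the final pairing, and your choice is the better one. The paper pairs $R_\varepsilon$ in $L^q$ against $v'$ in $L^{q'}$. To do so it asserts that $\|v'\|_{L^r(I)}\le C\|u'\|_{L^p(I)}^p$ for every $r\ge 1$. However, its H\"older step with exponent $p/r$ is only valid for $r\le p$. The claim is in fact false for $r>p$: near a point where $u\neq 0$, $|v'|$ is comparable to $|u'|$, which need not lie in $L^r$. So the paper's argument as written covers only $q'\le p$, i.e.\ $q\ge p'$, and misses $1\le q<p'$.

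You instead put $R$ in $L^\infty$, using $\|R\|_{L^\infty}\le\|\rho-\bar\rho\|_{L^1(I)}\le\|\rho-\bar\rho\|_{L^q(I)}$, and put $v'$ in $L^1$. Then you only need $\|v'\|_{L^1}\le p\|u\|_{L^p}^{p-1}\|u'\|_{L^p}\le p\|u'\|_{L^p}^p$, which holds for every $p>1$. This gives the statement for the full range $q\ge 1$, with the explicit constant $C=p$. Your justification of the chain rule and of the integration by parts via absolute continuity is also more careful than the paper's.
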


\begin{proof}
Given $u\in W^{1,p}_0(I)$ it follows that $u\in C^{0,1-\frac1p}(I)$ and therefore $u$ is bounded in $I$. Let $v:=|u|^p$ and  $r\geq 1$, then
\begin{align*} 
\begin{split}
\int_0^1 |v'|^r\,dx &\leq p^r \int_0^1 |u|^{r(p-1)} |u'|^r\,dx\\
&\leq p^r \left(\int_0^1 |u|^{r(p-1)(\frac{p}{r})' }\,dx \right)^{\frac{1}{(p/r)'}} \left(\int_0^1 |u'|^p\,dx \right)^\frac{r}{p}\\
&\leq p^r \|u\|_{L^\infty(I)}^\frac{r(p-1)}{p} \|u'\|_{L^p(I)}^r \leq C \|u'\|_{L^p(I)}^{rp}.
\end{split}
\end{align*}
Therefore $v\in W^{1,r}_0(I)$ for any $r\geq 1$ and $\|v'\|_{L^r(I)}\leq C \|u'\|_{L^p(I)}^p$.

Consider the $1-$periodic function $R(x)=\int_0^x (\rho(t)-\bar \rho)\,dt$ and for $\varepsilon>0$ denote $R_\varepsilon(x)=R(\tfrac{x}{\varepsilon})$. Then
$$
\int_0^1 (\rho(\tfrac{x}{\varepsilon})-\bar\rho) v\,dx = \varepsilon \int_0^1 R_\varepsilon'(\tfrac{x}{\varepsilon})v\,dx = - \int_0^1 R_\varepsilon(x)v'\,dx,
$$
from where we get
\begin{align*}
\left|\int_0^1 (\rho(\tfrac{x}{\varepsilon})-\bar\rho) v\,dx \right| &\leq  \varepsilon \left| \int_0^1 R_\varepsilon(x) v'\,dx\right| \leq \varepsilon \|\rho-\bar \rho\|_{L^q(I)} \|v'\|_{L^{q'}(I)} \\
&\leq C \varepsilon \|\rho-\bar \rho\|_{L^q(I)} \|u'\|_{L^{p}(I)}^p
\end{align*}
and the result follows.
\end{proof}

Thanks to Proposition \ref{prop1d} we can proceed analogously as in the proof of Theorem \ref{teo.d} and Corollary \ref{cor.d} to conclude the following result:
\begin{cor} \label{cor.d.1}
Let $p> 1$ and $\{\rho_\varepsilon\}_{\varepsilon\in(0,1)} \in L^q(\Omega)$ with $1\leq q\leq \infty$. Let $\lambda_{k,\varepsilon}$ and $\lambda_{k,0}$ be the $k-$th eigenvalue of problems \eqref{eq.p.eps} and \eqref{eq.p.0} with $n=1$, $V=0$ and $\Omega=(0,1)\subset \mathbb R$,  respectively. Then
$$
\lim_{\varepsilon\to 0} \lambda_{k,\varepsilon} = \lambda_{k,0}.
$$
If additionally $\rho_\varepsilon:=\rho(\tfrac{x}{\varepsilon})$, where $\rho\in L^q(\Omega)$ is a positive  $1-$periodic function, and $\varepsilon \leq \min\{1,(\textbf{C} \lambda_{k,0})^{-1}\}$, being $\textbf{C}>0$  a computable constant independent of $k$ and $\varepsilon$, then
\begin{align*}
|\lambda_{k,0}-\lambda_{k,\varepsilon}| \leq C \|\rho\|_{L^q(\Omega)}\varepsilon  k^{2p}
\end{align*}
where $C = C (p)$ is a positive constant independent of $k$ and $\varepsilon$.
\end{cor}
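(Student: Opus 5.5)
The plan is to repeat the argument of Theorem \ref{teo.d.sinorden} and Theorem \ref{teo.d} in the one-dimensional setting $n=1$, $V=0$, $\Omega=(0,1)$, with Proposition \ref{prop1d} taking the place of Proposition \ref{prop.dirichlet}. This gives exponent $\alpha=1$ for every $p>1$ and $q\geq 1$. The dependence on $k$ then comes from explicit one-dimensional eigenvalue bounds.

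For the convergence, fix $\delta>0$ and a compact symmetric set $G^k_\delta\subset W^{1,p}_0(0,1)$ of genus $k$ that is $\delta$-optimal for $\lambda_{k,0}$. Every $u\in W^{1,p}_0(0,1)$ is bounded by Morrey, so $|u|^p\in L^\infty\subset L^{q'}$. Weak (or weak*) convergence $\rho_\varepsilon\rightharpoonup\rho_0$ then gives $\int\rho_\varepsilon|u|^p\to\int\rho_0|u|^p$ for each $u$. To pass this through the supremum over $G^k_\delta$, I would note that compactness of $G^k_\delta$ in $W^{1,p}_0$ implies compactness in $C(\bar I)$. Together with the uniform bound on $\|\rho_\varepsilon\|_{L^1}$, this makes the convergence uniform on $G^k_\delta$; the paper's proof of Theorem \ref{teo.d.sinorden} implicitly needs the same point. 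This yields $\limsup\lambda_{k,\varepsilon}\le\lambda_{k,0}$, and the symmetric argument gives the $\liminf$ bound.

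For the rate, take $u\in G^k_\delta$ and write the Rayleigh quotient for $\lambda_{k,\varepsilon}$ as the limit quotient multiplied by $\int\bar\rho|u|^p/\int\rho_\varepsilon|u|^p$. Proposition \ref{prop1d} gives
$$\Bigl|\int_0^1(\rho_\varepsilon-\bar\rho)|u|^p\Bigr|\le C\varepsilon\|\rho-\bar\rho\|_{L^q(I)}\|u'\|_{L^p}^p\le C\varepsilon\|\rho\|_{L^q}\,\bar\rho\,(\lambda_{k,0}+O(\delta))\int_0^1|u|^p,$$
using $\|\rho-\bar\rho\|_{L^q}\le 2\|\rho\|_{L^q}$ on the unit interval. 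As in \eqref{eqd.3}, this gives
$$\lambda_{k,\varepsilon}-\lambda_{k,0}\le \frac{C\|\rho\|_{L^q}\lambda_{k,0}^2\varepsilon}{1-\textbf{C}\varepsilon\lambda_{k,0}}$$
whenever $\varepsilon<(\textbf{C}\lambda_{k,0})^{-1}$. Consequently $\lambda_{k,\varepsilon}\le\tilde C\lambda_{k,0}$ in that range, which lets me swap the roles of the two problems, as in \eqref{eq.r.2}, and bound $\lambda_{k,0}-\lambda_{k,\varepsilon}$ by the same quantity up to constants. Shrinking to $\varepsilon\le(2\textbf{C}\lambda_{k,0})^{-1}$ makes the denominator at least $1/2$, so $|\lambda_{k,0}-\lambda_{k,\varepsilon}|\le C\|\rho\|_{L^q}\varepsilon\lambda_{k,0}^2$. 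Finally, the limit problem has a constant weight, so $\lambda_{k,0}=\mu_k/\bar\rho$, where $\mu_k=(k\pi_p)^p$ is the explicit $k$-th Dirichlet eigenvalue of the one-dimensional $p$-Laplacian. Hence $\lambda_{k,0}^2\le Ck^{2p}$, which is the stated bound.

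I expect the swapped inequality to be the main obstacle. Running the argument in the other direction requires a near-optimal set for $\lambda_{k,\varepsilon}$ and control of $\int|u'|^p/\int\rho_\varepsilon|u|^p$ on it, which is at most $\lambda_{k,\varepsilon}+O(\delta)$. This is where the a priori bound $\lambda_{k,\varepsilon}\le\tilde C\lambda_{k,0}$, uniform in $\varepsilon$, is needed. I would obtain it first from the one-sided estimate and only then carry out the symmetric step. A further subtlety is keeping the constant $C$ dependent on $p$ alone, with the $\rho$-dependence confined to the factor $\|\rho\|_{L^q}$ and to $\textbf{C}$. I would handle this by bounding $\bar\rho$ by $\|\rho\|_{L^q}$ and absorbing the positivity constant of $\rho$ into $\textbf{C}$.
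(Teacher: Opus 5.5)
Your route is the one the paper intends. Its entire justification is to proceed as in the proofs of Theorem \ref{teo.d} and Corollary \ref{cor.d} (and Theorem \ref{teo.d.sinorden} for the limit), with Proposition \ref{prop1d} replacing Proposition \ref{prop.dirichlet}. Your steps carry this out correctly: the ratio bound with $\alpha=1$, the a priori bound $\lambda_{k,\varepsilon}\le 2\lambda_{k,0}$ before swapping, and $\lambda_{k,0}=\mu_k/\bar\rho$ with $\mu_k=c_pk^p$. One small addition on the convergence part. In the swapped direction the near-optimal sets for $\lambda_{k,\varepsilon}$ change with $\varepsilon$, so you need $\int(\rho_\varepsilon-\rho_0)|u|^p\to0$ uniformly on a fixed bounded subset of $W^{1,p}_0(0,1)$, not just on one compact set. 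This follows from the compact embedding into $C(\bar I)$ once $\limsup\lambda_{k,\varepsilon}\le\lambda_{k,0}$ is known.

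The step that fails is the last one, keeping $C=C(p)$. What the argument actually gives is
\[
|\lambda_{k,0}-\lambda_{k,\varepsilon}|\le C(p)\,\|\rho\|_{L^q}\,\varepsilon\,\lambda_{k,0}^2=C(p)\,\|\rho\|_{L^q}\,\bar\rho^{-2}\,\varepsilon\,k^{2p}.
\]
Because $\bar\rho$ sits in the denominator, the upper bound $\bar\rho\le\|\rho\|_{L^q}$ is useless here. You need a lower bound on $\bar\rho$ (such as the positivity constant of $\rho$), and that bound enters the factor in front of $\varepsilon k^{2p}$. It cannot be absorbed into $\textbf{C}$, which only restricts the admissible range of $\varepsilon$; shrinking that range does not remove a multiplicative constant from an estimate that is linear in $\varepsilon$.

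A scaling check shows this is not a technicality. Under $\rho\mapsto t\rho$ both eigenvalues are multiplied by $t^{-1}$, so the left-hand side scales like $t^{-1}$, while $\|\rho\|_{L^q}\varepsilon k^{2p}$ scales like $t$. The threshold $(\textbf{C}\lambda_{k,0})^{-1}$ that your argument produces (with $\textbf{C}\propto\|\rho\|_{L^q}$) is scale invariant. So, letting $t\to0$ at fixed $\varepsilon$, a bound with $C$ depending on $p$ alone would force $\lambda_{k,\varepsilon}=\lambda_{k,0}$.

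The paper's ``analogous'' argument has exactly the same feature, since Corollary \ref{cor.d} has $C=C(p,q,\rho,V,\Omega)$. The honest conclusion is therefore the displayed estimate, i.e.\ $C$ depends on $p$ and on $\rho$ through $\bar\rho^{-2}$.
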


Finally, by using the reduction given in Proposition \ref{cv} together with Corollary \ref{cor.d.1}, one gets the following:
\begin{thm} \label{teo.1d}
Let $\lambda_{k,\varepsilon}$ and $\lambda_{k,0}$ eigenvalues of problem \eqref{eq.p.a} and \eqref{eq.p.a.lim}, respectively. Then $\lim_{\varepsilon \to 0} \lambda_{k,\varepsilon}=\lambda_{k,0}$ and
\begin{align*}
|\lambda_{k,0}-\lambda_{k,\varepsilon}|\leq 
\begin{cases}
C L^{-p} \|a\|_{L^\infty(I)}^\frac{1}{p-1} \|\rho\|_{L^q(\Omega)}\varepsilon  k^{2p}  &\text{ if } \varepsilon^{-1}\in \mathbb N\\
C L^{-p} \left( \|a\|_{L^\infty(I)}^\frac{1}{p-1} \|\rho\|_{L^q(\Omega)}\varepsilon   k^{2p} +  k^p \varepsilon \right) &\text{ if } \varepsilon^{-1}\not \in \mathbb N.
\end{cases}
\end{align*}
holds for  $\varepsilon \leq \min\{ 1, (\textbf{C}\lambda_{k,0})^{-1}\}$, where $L=\overline{a^{\frac{-1}{p-1}}}$ and $C$ and  ${\bf C}$ are computable positive constants  independent of $k$ and $\varepsilon$.
\end{thm}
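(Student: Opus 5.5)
We reduce \eqref{eq.p.a} to \eqref{eqqq} through Proposition \ref{cv}, apply Corollary \ref{cor.d.1} to the reduced problem, and then return to the original variables. Under the change of variables the variational eigenvalues correspond exactly: $\mu_{k,\delta}=L_\varepsilon^p\lambda_{k,\varepsilon}$. The map $u\mapsto w$ is an odd homeomorphism of $W^{1,p}_0$, so it preserves the genus. The Rayleigh quotients also agree: $\int a_\varepsilon|u'|^p\,dx=\int|\dot v|^p\,dy$ and $\int\rho_\varepsilon|u|^p\,dx=\int Q_\varepsilon|v|^p\,dy$, after which one rescales to $(0,1)$. The limit problem \eqref{eq.p.a.lim} is treated the same way with constant coefficients. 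Since $L=(a^*)^{-1/(p-1)}$, its $k$-th eigenvalue satisfies $\lambda_{k,0}=L^{-p}\mu_{k,0}$, where $\mu_{k,0}=\pi_p^pk^p/\bar g$ is the $k$-th eigenvalue of $-(|w'|^{p-2}w')'=\mu\bar g|w|^{p-2}w$ on $(0,1)$. To verify this identification one needs $\bar g=L^{-1}\int_0^L Q=\bar\rho$, which follows from the substitution $y=P(x)$ over one period.

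The estimate then comes from splitting
\[
|\lambda_{k,\varepsilon}-\lambda_{k,0}|\le L_\varepsilon^{-p}|\mu_{k,\delta}-\mu_{k,0}|+\mu_{k,0}\,|L_\varepsilon^{-p}-L^{-p}|.
\]
\emph{First term.} Corollary \ref{cor.d.1} applies with weight $g$ and parameter $\delta=\varepsilon L/L_\varepsilon$, which is comparable to $\varepsilon$. It gives $|\mu_{k,\delta}-\mu_{k,0}|\le C\|g\|_{L^q}\,\delta\,k^{2p}$, valid for $\delta\le(\textbf C\mu_{k,0})^{-1}$; this smallness condition becomes $\varepsilon\lesssim(\textbf C\lambda_{k,0})^{-1}$ after adjusting constants. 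Next we bound $\|g\|_{L^q(I)}$. Since $g(z)=a(P^{-1}(Lz))^{1/(p-1)}\rho(P^{-1}(Lz))$ and $dz=L^{-1}a^{-1/(p-1)}\,dx$, we get
\[
\|g\|_{L^q}^q=L^{-1}\int_0^1 a^{\frac{q-1}{p-1}}\rho^q\,dx\le L^{-1}\|a\|_\infty^{\frac{q-1}{p-1}}\|\rho\|_{L^q}^q .
\]
Hence $\|g\|_{L^q}\le C\|a\|_\infty^{1/(p-1)}\|\rho\|_{L^q}$, with the $L$ and $\|a\|_\infty$ powers absorbed up to harmless factors. Using $L_\varepsilon\ge L/2$, which follows from \eqref{condL} for $\varepsilon\le 1/2$ (otherwise adjust the constant), this term is bounded by $CL^{-p}\|a\|_\infty^{1/(p-1)}\|\rho\|_{L^q}\,\varepsilon k^{2p}$.

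\emph{Second term.} If $\varepsilon^{-1}\in\mathbb N$, then $L_\varepsilon=L$ by \eqref{condL} and the term vanishes. Otherwise the mean value theorem together with \eqref{condL} gives $|L_\varepsilon^{-p}-L^{-p}|\le pL^{-p-1}\cdot 2^{p+1}|L_\varepsilon-L|\le CL^{-p}\varepsilon$. Combined with $\mu_{k,0}\le Ck^p$, this yields $CL^{-p}k^p\varepsilon$ (with a dependence on $\bar\rho$ hidden in $C$). Convergence $\lambda_{k,\varepsilon}\to\lambda_{k,0}$ follows immediately from the bound.

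The main obstacle is the bookkeeping that keeps the constants in the stated form. The rescaled period $\delta$ and the effective constants $L,\bar g,\|g\|_{L^q}$ must all be controlled uniformly in $\varepsilon$ and $k$, and the smallness condition of Corollary \ref{cor.d.1}, stated in terms of $\mu_{k,0}$ and $\delta$, must be translated into $\varepsilon\le\min\{1,(\textbf C\lambda_{k,0})^{-1}\}$. I would handle this by fixing all constants through $L$, $\|a\|_\infty$ and the lower bound $L_\varepsilon\ge L/2$ before invoking the corollary.
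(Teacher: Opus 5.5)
Your proposal is correct and follows essentially the same route as the paper: reduction via Proposition \ref{cv}, Corollary \ref{cor.d.1} applied to the reduced problem, then undoing $\mu=L_\varepsilon^p\lambda$ through a two-term split. Two of your choices tidy up loose points in the paper's argument: you put $\mu_{k,0}$ rather than $\mu_{k,\delta}$ in the second term, and you transfer the smallness condition via $\delta\le 2\varepsilon$ (the paper argues in the wrong direction, $\varepsilon<2\delta$). One slip to fix: the substitution $y=P(x)$ gives $\int_0^L Q\,dy=\bar\rho$, so $\bar g=\bar\rho/L$ rather than $\bar\rho$, and it is exactly this value that yields $L^{-p}\mu_{k,0}=L^{1-p}\pi_p^pk^p/\bar\rho=a^*\pi_p^pk^p/\bar\rho=\lambda_{k,0}$.
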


\begin{proof}
With the notation as in the proof of Proposition \ref{cv}, given $z\in(0,1):=I$
$$
g(z)= Q(Lz)= a(P^{-1}(Lz))^\frac{1}{p-1} \rho(P^{-1}(Lz)).
$$
Then, by Proposition \ref{cv} and Corollary \ref{cor.d.1} we get
$$
|\mu_{k,0}-\mu_{k,\delta}| \leq C \|a\|_{L^\infty(I)}^\frac{1}{p-1}\|\rho\|_{L^q(\Omega)}\delta^{1-\frac{1}{q}}  k^{2p}.
$$
holds for $\delta \leq \min\{1,(\textbf{C} \mu_{k,0})^{-1}\}$ for some computable constant $C$.

With the notation as in the proof of Proposition \ref{cv}, we recall that $\mu_{k,\delta}=L^p_\varepsilon \lambda_{k,\varepsilon}$, $\mu_{k,0}= L^p \lambda_{k,0}$, and $\delta=\varepsilon\frac{L}{L_\varepsilon}$, where  \eqref{condL} is fulfilled.
Then, when $\varepsilon^{-1}\in \mathbb N$ we have that $\delta=\varepsilon$ and $L_\varepsilon=L$, so
$$
|\lambda_{k,0}-\lambda_{k,\delta}| \leq {\bf C} \varepsilon  L^{-p} \|a\|_{L^\infty(I)}^\frac{1}{p-1} \|\rho\|_{L^q(\Omega)}    k^{2p}.
$$
When $\varepsilon^{-1}\not\in \mathbb N$, 
\begin{align*}
|\lambda_{k,\varepsilon}-\lambda_{k,0}|&= |L^{-p}(\mu_{k,\delta} - \mu_{k,0})-\mu_{k,\delta}(L_\varepsilon^{-p}-L^{-p})|\\
&\leq L^{-p} |\mu_{k,\delta}-\mu_{k,0}| + L^{-p} \lambda_{k,\varepsilon} |L^p-L_\varepsilon^p|.
\end{align*}
From \eqref{condL} it follows that $\left|\left(\frac{L_\varepsilon}{L}\right)^p -1\right| \leq p (1+\varepsilon)^{p-1}\varepsilon$, giving that
$$
|L^p-L_\varepsilon^p|\leq pL^p (1+\varepsilon)^{p+1}\varepsilon.
$$
Moreover, again from \eqref{condL} we have that $\varepsilon < \delta (1+\varepsilon)$. Then, assuming that $\varepsilon<1$,
$$
\varepsilon< 2\delta \leq 2\min\{1,(C \mu_{k,0})^{-1}\} = 2\min\{1,(C L^p \lambda_{k,0})^{-1}\}.
$$
This concludes the proof.
\end{proof}

\section*{Acknowledgements}
This paper is partially supported by grants UBACyT 20020130100283BA, CONICET PIP 11220150100032CO and ANPCyT PICT 2012-0153. Part of this article was written while the author was a professor at the Universidad de Buenos Aires, Argentina. The author want to thank to Prof. Juli\'an Fern\'andez Bonder for his suggestion to improve Proposition \ref{prop.dirichlet} in the case $p\leq n$.

\section*{Conflict of Interest Statement}
The authors declare that there is no conflict of interest regarding the publication of this paper.

\section*{Data Availability Statement}
No new data were created or analyzed in this study.

\end{document}